 \newtheorem{theorem}{Theorem}
 \newtheorem{lemma}[theorem]{Lemma}
 \newtheorem{proposition}[theorem]{Proposition}
\theoremstyle{definition}
 \newtheorem{definition}{Definition}
 \newtheorem{remark}{Remark}
 \newcommand \cal{\mathcal}     
 \newcommand\zu{[0,1]}
\newcommand{\N}{\ensuremath{\mathbb N}} 
\newcommand{\R}{\ensuremath{\mathbb R}} 
\newcommand\CC{\mathcal{CC}^d}
\newcommand{\ep}{\varepsilon}
\newcommand{\xxx}{{\mathbf {x}}}
\newcommand{\zz}{\mathbf {z}}
\renewcommand{\ggg}{\gamma}
\renewcommand{\lll}{\lambda}
\title{Multifractal properties of typical convex functions}
\author{Zolt\'an Buczolich}
\address{Zolt\'an Buczolich, Department of Analysis,  ELTE E\"otv\"os Lor\'and
University, P\'azm\'any P\'eter S\'et\'any 1/c, 1117 Budapest, Hungary \\
 {\href{http://www.cs.elte.hu/~buczo}{\tt www.cs.elte.hu/\hbox{$\sim$}buczo}}\\
 ORCID: 0000-0001-5481-8797}
\email{buczo@cs.elte.hu} \thanks{Research supported by  the Hungarian National Research, Development and Innovation Office--NKFIH, Grant 124003.}
\author{St\'ephane Seuret }
\address{St\'ephane Seuret, Universit\'e Paris-Est, LAMA (UMR 8050),  UPEMLV, UPEC, CNRS, F-94010, Cr\'eteil, France}
\email{seuret@u-pec.fr }
\thanks{
Research   partly supported by the grant ANR MUTADIS ANR-11-JS01-0009.  
}
\date{\today}
\begin{document}
\maketitle

\medskip

\begin{abstract}
We study the singularity (multifractal)  spectrum of continuous convex functions  defined on $[0,1]^{d}$.  Let  $E_f({h}) $ be the set of points at which $f$ has a pointwise exponent equal to $h$. We first obtain general upper bounds for the Hausdorff dimension of these sets $E_f(h)$, for all convex functions $f$ and all $h\geq 0$.  We prove that for  typical/generic (in the sense of Baire)  continuous convex functions $f:[0,1]^{d}\to { \ensuremath { \mathbb R } }$, one has $\dim E_f(h) =d-2+h$ for all $h\in[1,2],$ and in addition, we obtain that the set $ E_f({h} )$  is empty if $h\in (0,1)\cup (1,+\oo)$. Also, when $f$ is typical, the boundary of $[0,1]^{d}$ belongs to $E_{f}({0})$. 
\end{abstract}

\section{Introduction and  main results}\label{introduction}

In this paper we investigate the multifractal properties of the continuous convex functions defined on $[0,1]^{d}$. This paper is a quite natural continuation of our papers \cite{BuS2,BSJMAA} where generic multifractal properties of measures and of  functions monotone increasing   in several variables (in short: MISV) were studied. It is interesting that all these natural objects supported on $\zu^d$ have very different typical multifractal behaviors.

Let us first recall that the  pointwise H\"older exponent and the singularity spectrum  for a locally bounded function are defined as follows.
 
\begin{definition}
Let $f \in L^\infty( { [0,1]^ { d } })$. For $h\geq 0$ and $\xxx\in  { [0,1]^ { d } }$,
the function $f$  belongs to $C^h(\xxx)$ if there are a polynomial $P$ of degree strictly  less than $[h]$ and a constant $C>0$ such that,  for  all $\xxx'$ close to  $\xxx$,
\begin{equation}
 \label{defpoint}
|f(\xxx') - P(\xxx'-\xxx)| \leq C |\xxx' -\xxx|^h.
\end{equation}
The pointwise  { H\"older } exponent of $f$ at $\xxx$ is 
$$h_f(\xxx) =  \sup\{h\geq 0: \ f\in C^h(\xxx) \}.$$
 \end{definition}

In the following, $\dim=\dim_{H}$ denotes the Hausdorff dimension.

\begin{definition}The
 singularity spectrum of  $f$  is the mapping 
 $$d_f(h)=\dim  E_f({h}) , \ \ \mbox{ where }  E_f({h}) =\{\xxx: h_f(\xxx)=h\}.$$
By convention $\dim \emptyset = -\infty$.
We will also use the sets
\begin{equation}
\label{defep}
E_f^{\leq}(h) =\{ \xxx: h_{f}(\xxx)\leq h  \} \supset  E_f({h}) .
\end{equation}
\end{definition}

We denote by $ {{\CC}}$ the set of continuous convex functions $f: {[0,1]^d}\to {\ensuremath {\mathbb R}}$.
Equipped with the supremum norm $\|\cdot\|$,  $ {\CC}$ is a separable complete metric space.  An open ball in $ {\CC}$ of center $f\in \CC$ of radius $r\geq 0$ is written as $B_{\|\cdot\|}(f,r)$, and a closed ball is $\overline{ B}_{\|\cdot\|}(f,r)$

In this paper, we first prove an upper bound for the multifractal spectrum of all functions in $\CC$. 
\begin{theorem}
\label{mainth1}
For any function $f\in \CC$, one has
$$d_f(h) \leq \begin{cases} \ \  \, d-1 & \mbox{ if } h \in [0,1)\\
d+h-2 & \mbox{ if } h \in [1,2]\\
\ \ \ \ \ d & \mbox{ if } h > 2.
 \end{cases}$$
\end{theorem}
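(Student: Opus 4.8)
The plan is to treat the three ranges separately, the middle one being the only substantial case. For $h\in[0,1)$ recall that a convex function is locally Lipschitz on the interior $(0,1)^d$, so $h_f(\xxx)\ge 1$ at every interior point; hence $E_f(h)\subseteq\partial\zu^d$ and $\dim E_f(h)\le d-1$. For $h>2$ the bound $\dim E_f(h)\le d$ is trivial. Since $d-1\le d+h-2$ for $h\ge 1$, the boundary $\partial\zu^d$ never violates the middle bound either, and the set of points where $f$ fails to be differentiable — which for a convex function is contained in a countable union of Lipschitz hypersurfaces, hence of dimension $\le d-1$ — may also be discarded. It therefore suffices to bound the dimension of the set of interior differentiable points $\xxx$ with $h_f(\xxx)\le h$, for $h\in[1,2]$.

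At such a point let $\ell_\xxx(y)=f(\xxx)+\langle\nabla f(\xxx),y-\xxx\rangle$ be the supporting affine map and set $\phi_\xxx=f-\ell_\xxx$. By convexity $\phi_\xxx\ge 0$, it attains its minimum $0$ at $\xxx$, and $\nabla\phi_\xxx(\xxx)=0$. Unwinding the definition, $h_f(\xxx)\le h$ means that for every $\ep>0$ there are arbitrarily small radii $r$ with $\max_{|y-\xxx|=r}\phi_\xxx(y)\ge r^{h+\ep}$, where $B(\xxx,r)$ denotes the Euclidean ball. The natural object to test against is $\nu:=\Delta f$, the distributional Laplacian, a positive Radon measure because $f$ is convex (hence subharmonic), finite on each compact subset of $(0,1)^d$; note $\Delta\phi_\xxx=\nu$ since $\ell_\xxx$ is affine. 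I deliberately use the trace of the Hessian rather than the Monge--Amp\`ere measure $|\nabla f(\cdot)|$, which can vanish identically for functions that are flat in some directions.

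The heart of the matter is the following convex-geometric estimate, the \emph{Key Lemma}: there is $c=c(d)>0$ such that for every convex $\phi\ge 0$ on $\overline{B(\xxx,r)}$ with $\min\phi=\phi(\xxx)=0$,
\[
\Delta\phi\big(B(\xxx,r)\big)\ \ge\ c\,r^{d-2}\max_{|y-\xxx|=r}\phi(y).
\]
Granting this, the radii above give $\nu(B(\xxx,r))\ge c\,r^{d-2}r^{h+\ep}=c\,r^{d+h-2+\ep}$ at arbitrarily small scales. Fixing a compact $K\subseteq(0,1)^d$ with $\nu(K)<\infty$, the balls so produced form a Vitali cover of $E_f^{\leq}(h)\cap K$; the Besicovitch covering theorem extracts boundedly overlapping subfamilies, whence $\sum_i r_i^{\,d+h-2+\ep}\le c^{-1}\sum_i\nu(B_i)\le C_d\,c^{-1}\nu(K)<\infty$. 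Since the radii may be taken $<\delta$ for any $\delta$, this bounds $\mathcal{H}^{d+h-2+\ep}$ and gives $\dim E_f^{\leq}(h)\le d+h-2+\ep$; letting $\ep\to 0$ and using $E_f(h)\subseteq E_f^{\leq}(h)$ completes the middle range.

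The main obstacle is the Key Lemma, because a large value of $\phi$ at a single point of the sphere does not make its spherical average large: the easy divergence-theorem bound $\Delta\phi(B(\xxx,r))\ge r^{d-2}\int_{|\omega|=1}\phi(\xxx+r\omega)\,d\sigma(\omega)$, obtained from $\partial_\nu\phi(\xxx+r\omega)=\tfrac{d}{dt}\phi(\xxx+t\omega)|_{t=r}\ge\phi(\xxx+r\omega)/r$, is too weak, since $\phi$ can concentrate on a cap of tiny measure. The point is that convexity forces compensation: where $\phi$ concentrates, the radial derivative $\partial_\nu\phi$ is correspondingly large. To capture this I would work with the worst direction $u$ pointing from $\xxx$ to a maximizer $\zz$, integrate the nonnegative second differences $\phi(a+ru)+\phi(a-ru)-2\phi(a)$ over a cross-sectional $(d-1)$-disk of points $a\perp u$, identify this integral with a lower bound for $\partial_{uu}\phi(B(\xxx,r))\le\Delta\phi(B(\xxx,r))$, and use monotonicity of $\nabla\phi$ to propagate the extremal value $M=\max_{|y-\xxx|=r}\phi$ across a cross-section of $(d-1)$-measure $\gtrsim r^{d-1}$, thereby bounding $\Delta\phi(B(\xxx,r))$ below by $\gtrsim r^{d-1}\cdot(M/r)=M\,r^{d-2}$. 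Making this propagation quantitative — controlling exactly how the value at the extremal point spreads to neighbouring parallel chords — is the delicate step.
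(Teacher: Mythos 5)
Your treatment of $h\in[0,1)$ and $h>2$ is fine and matches the paper's (local Lipschitzness of convex functions in the interior, plus the trivial bound), and reducing to interior points of differentiability is legitimate since that exceptional set has dimension at most $d-1\le d+h-2$. The problem is that the entire middle range rests on your Key Lemma, which you do not prove and which, \emph{as stated}, is false for $d\ge 4$. Take $\xxx=\mathbf{0}$, a unit vector $u$, a small $\delta>0$, and
$$\phi(y)=\frac{M}{r\delta}\,\bigl(\langle y,u\rangle-r(1-\delta)\bigr)^{+}.$$
This is convex, nonnegative, vanishes on a neighbourhood of $\mathbf{0}$ (so $\min\phi=\phi(\mathbf{0})=0$ and $\nabla\phi(\mathbf{0})=0$), and $\max_{|y|=r}\phi(y)=\phi(ru)=M$. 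Its distributional Laplacian is $\frac{M}{r\delta}$ times surface measure on the hyperplane $\{\langle y,u\rangle=r(1-\delta)\}$, which meets $B(\mathbf{0},r)$ in a $(d-1)$-disc of radius about $r\sqrt{2\delta}$; hence $\Delta\phi\bigl(B(\mathbf{0},r)\bigr)\approx M r^{d-2}\delta^{(d-3)/2}\to 0$ as $\delta\to 0$ when $d\ge 4$, while your claimed lower bound $c\,r^{d-2}M$ does not. This example also shows exactly where your proposed proof of the lemma breaks: the ``cross-section of $(d-1)$-measure $\gtrsim r^{d-1}$'' across which you want to propagate the extremal value simply is not available inside $B(\xxx,r)$ when the maximiser sits on the boundary sphere and the function concentrates in a thin slab. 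A repaired statement would have to decouple the two radii (e.g.\ bound $\Delta\phi(B(\xxx,2r))$ below by $c\,r^{d-2}\max_{|y-\xxx|=r}\phi$); that version survives the slab example and is plausibly true, but proving it is genuinely delicate (one must show that the slope jump of order $M/r$ forced on the chord through $\xxx$ and the maximiser persists on a positive proportion of parallel chords, which requires an argument — separation of the convex zero set, monotonicity of $\partial\phi$ — that you only gesture at). Since this lemma carries all the weight of the $h\in[1,2]$ case, the proposal has a real gap, not just a missing routine verification.

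For contrast, the paper avoids any such isoperimetric-type inequality for the Laplacian measure: it first shows (Proposition \ref{gototheaxis}) that a point with exponent $h$ transmits that exponent to the restriction of $f$ along a whole cone of directions, then applies Marstrand's slicing theorem to a putative set of dimension $>d+h-2$ to produce a line on which the slice has dimension $>h-1$, and finally reduces, via Lemma \ref{*hregd2}, to the known upper bound $\dim E^{\leq}_{g'}(h-1)\le h-1$ for the monotone one-dimensional function $g'_{\pm}$. If you can establish the corrected two-radius Key Lemma, your covering argument against $\Delta f$ would give an attractive alternative proof (and even finiteness of the corresponding Hausdorff measure); as it stands, the argument is incomplete at its central step.
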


Then we compute the multifractal spectrum of typical functions in $\CC$. 
Recall that a  property is  typical, or generic in a complete metric space $E$, when it holds on 
a residual set, i.e. a set  with a complement of first Baire category.

\begin{theorem}
\label{mainth2}
For typical functions $f\in \CC$, one has
$$d_f(h) = \begin{cases} \ \  \, d-1 & \mbox{ if } h =0\\
d+h-2 & \mbox{ if } h \in [1,2]\\
 \ \ -\infty & \mbox{ otherwise}.
 \end{cases}$$
 More precisely, one has $E_{f}({0}) = {\partial} ({[0,1]^d})$.
\end{theorem}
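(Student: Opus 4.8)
The plan is to establish Theorem~\ref{mainth2} by combining the universal upper bound of Theorem~\ref{mainth1} with a matching lower bound that holds on a residual set, while separately nailing down the behaviour on the boundary and the emptiness of $E_f(h)$ outside $\{0\}\cup[1,2]$. The natural first move is to understand why typical convex functions are as irregular as convexity permits. A convex function is locally Lipschitz in the interior, so $h_f(\xxx)\geq 1$ for every interior point; combined with the upper bound $d_f(h)\leq d-1$ for $h\in[0,1)$ and the fact that convex functions are differentiable outside a set of dimension at most $d-1$, this forces the exponent to be at least $1$ at interior points except on a small set, explaining why no exponent in $(0,1)$ survives. I would first record these deterministic facts, reserving the exponent $h=0$ entirely for the boundary $\partial([0,1]^d)$, where $f$ need not be continuous from outside and where the generic construction will make $f$ genuinely non-Lipschitz.

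The heart of the argument is the lower bound $\dim E_f(h)\geq d+h-2$ for $h\in[1,2]$, for $f$ in a residual set. The approach I would take is the standard Baire-category scheme: construct, for each target exponent and each approximation scale, a dense open set of convex functions that carry the prescribed singular behaviour at sufficiently many points, then intersect countably many of these to obtain a residual set. Concretely, I would build reference convex functions whose graphs have controlled ``ridges'' or conical singularities prescribing a given pointwise exponent $h$ along a subset of dimension $d+h-2$; perturbing any given $f\in\CC$ by a small-in-sup-norm convex bump of this type keeps us inside $\CC$ and plants the desired exponent on a rich set. The geometric content is that a convex function with exponent $h\in(1,2)$ at a point looks, after subtracting its supporting hyperplane, like $|\xxx'-\xxx|^h$ along certain directions; arranging such behaviour along a self-similar or grid-like family of points of the correct dimension is what yields $d+h-2$. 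I would use the density of these configurations to guarantee that the generic $f$ inherits the full spectrum for every $h\in[1,2]$ simultaneously, handling the continuum of exponents via a countable dense set of rationals and a continuity/closedness argument for the remaining $h$.

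The main obstacle I anticipate is the emptiness claim for $h\in(1,2)\cup(2,\infty)$ together with the lower-bound construction being mutually consistent: I must show that the generic $f$ has \emph{no} point with exponent strictly between $1$ and $2$ that is not already accounted for, yet simultaneously the spectrum is nontrivial on all of $[1,2]$. The resolution is that the upper bound and the lower bound match \emph{dimensionally} on $[1,2]$ while the generic function is forced to be nowhere twice differentiable in a strong sense, so that exponents above $2$ cannot occur generically (any point with $h_f(\xxx)>2$ would contradict the density of the planted singularities near every point). Thus the delicate step is proving $E_f(h)=\emptyset$ for $h>2$ generically: I would show that a dense open set of convex functions fails to admit a second-order Taylor expansion at every point, by inserting arbitrarily fine convex oscillations of exponent exactly $2$ everywhere, and then intersect over scales.

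Finally, for the boundary statement $E_f(0)=\partial([0,1]^d)$, the plan is to engineer the generic construction so that each perturbation steepens $f$ near $\partial([0,1]^d)$ without violating convexity or continuity, driving $h_f(\xxx)=0$ at every boundary point; convexity on $[0,1]^d$ permits unbounded one-sided slopes at the boundary, so a residual set of functions will have $f$ blowing up in gradient along the whole boundary, giving exponent $0$ there. The inclusion $\partial([0,1]^d)\subseteq E_f(0)$ comes from this construction, and the reverse inclusion $E_f(0)\subseteq\partial([0,1]^d)$ follows from the deterministic local Lipschitz property in the interior. Assembling these pieces — the deterministic interior regularity, the residual lower-bound construction, the generic suppression of exponents outside $[1,2]$, and the boundary analysis — yields the stated spectrum.
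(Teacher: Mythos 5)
Your skeleton (Baire scheme with dense open sets of perturbed approximants, upper bound from Theorem \ref{mainth1}, boundary blow-up of one-sided derivatives, interior Lipschitz regularity killing $h\in(0,1)$) matches the paper's, but the proposal omits the two ideas that make the scheme actually close, and this is a genuine gap rather than a different route. First, ``perturbing $f$ by a small sup-norm convex bump plants the desired exponent'' only puts the singularities on the perturbed approximant $f_m+\overline f_l$; your residual set consists of functions merely lying in a small $\|\cdot\|$-ball around such approximants, and sup-norm closeness by itself does not transfer pointwise H\"older exponents. The paper's resolution is Lemma \ref{*djdiffd}: for \emph{convex} functions, sup-norm closeness to a $C^1$ convex function forces all one-sided partial derivatives to be uniformly close on slabs $x_j\in[\ep,1-\ep]$. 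This is the stability mechanism, and it dictates the second missing idea: the singularities are planted not in $f$ directly but in $g=\partial_1 f$ (a function monotone in $x_1$, via the staircase perturbations $\gamma_l$), after which Lemma \ref{*hregd} ($h_f(\xxx)=h_{f'}(\xxx)+1$ for convex $f$ with $h_f(\xxx)\in[1,2)$) and the already-established multifractal analysis of monotone/MISV functions yield $\dim E_f^{\leq}(h)\geq d+h-2$. Your direct ``ridge/conical singularity'' construction never addresses why those ridges survive on the residual set, which is where the whole difficulty lies.

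Three further points. (i) Handling the continuum of exponents by ``rationals plus a continuity/closedness argument'' is not justified: the sets $E_f(h)$ are pairwise disjoint and the spectrum has no a priori continuity; the paper instead uses one sequence of perturbations whose associated sets $F_{h-1}$ and measures $\mu_{h-1}$ are defined for all $h\in[1,2]$ simultaneously, and then passes from $E_f^{\leq}(h)$ to $E_f(h)$ by combining $\mu_{h-1}(E_f^{\leq}(h))>0$ with the upper bound $\dim E_f^{\leq}(h-1/n)\leq d+h-1/n-2$, which forces $\mu_{h-1}(E_f(h))>0$. (ii) Your argument that a point with $h_f(\xxx)>2$ ``would contradict the density of the planted singularities near every point'' is not valid: dense singularities nearby do not preclude high pointwise regularity at a given point. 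Proposition \ref{*lemupperh*} instead derives a contradiction at $\xxx$ itself from the staircase structure of $\partial_1$ of the approximant (a case analysis on whether $x_1$ lies on a flat step or a steep ramp), again mediated by Lemma \ref{*djdiffd}. (iii) You assert an emptiness claim for $h\in(1,2)\cup(2,\infty)$; the correct statement is emptiness on $(0,1)\cup(2,\infty)$, since $E_f(h)$ has dimension $d+h-2\geq d-1$ for every $h\in[1,2]$.
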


%
%
%
%

It is interesting to compare Theorem \ref{mainth2} with the regularity of other  typical objects naturally defined on the cube $\zu^d$.

When $d=1$, generic properties of continuous functions on $\zu$ have been studied for a long time (see for instance \cite{Gruber,Howe}, and the other references we mention in the present paper). It was proved that typical continuous functions belong to $C^0(x)$, for every $x\in \zu$. In \cite{BUC}, typical monotone continuous functions on the interval $\zu$ were proved to have a much more interesting multifractal behavior:  such functions $f:\zu\to\R$ satisfy 
\begin{equation}
\label{inegmonotone}
d_f(h) = \dim E^{\leq}_f(h) =   \begin{cases}   \ \ h & \mbox{ if } h \in [0,1]\\
 -\infty & \mbox{ otherwise}.
 \end{cases}
 \end{equation}
 The same holds true for typical monotone  functions (not necessarily continuous).
  We remark that it also follows, for example from results in  \cite{BUC}, that for arbitrary
 monotone functions  there is an upper estimate
 \begin{equation}
\label{inegmonotoneb}
\dim E^{\leq}_f(h) \leq  h \mbox{ for } h \in [0,1].
 \end{equation}

 It is interesting to extend these results to higher dimensions.

The first natural way is to consider Borel measures on the cube. 
 The local regularity of a positive measure $\mu$  at a given $\xxx\in { [0,1] }^d$ is given by a {local dimension} (or a {local  { H\"older } exponent}) $h_\mu(\xxx)$, defined as
$$
 \label{defexpmu}
h_\mu(\xxx)=\liminf_{r\to 0^+}  \frac{\log \mu (B(\xxx,r) )} { \log r},
$$
where $B(\xxx,r)$ denotes the ball with center $\xxx$ and radius $r$. 
The singularity spectrum of $\mu$ is the map 
$$d_\mu: h\geq 0 \mapsto \dim \,  E_\mu(h),$$
where
$
  E_\mu(h):=\{\xxx\in { [0,1]^d }: h_\mu(\xxx)=h\}.
$

In  \cite{BuS2} (see also \cite{Bay} for a nice generalization to all compact sets in $\R^d$), it is proved that typical measures $\mu$ supported on $\zu^d$  satisfy a multifractal formalism, and 
$$d_\mu(h) = \begin{cases}   \ \ h & \mbox{ if } h \in [0,d]\\
 -\infty & \mbox{ otherwise}.
 \end{cases}$$

\begin{figure}
  \begin{center}
  \includegraphics[width=8.0cm,height=6.4cm]{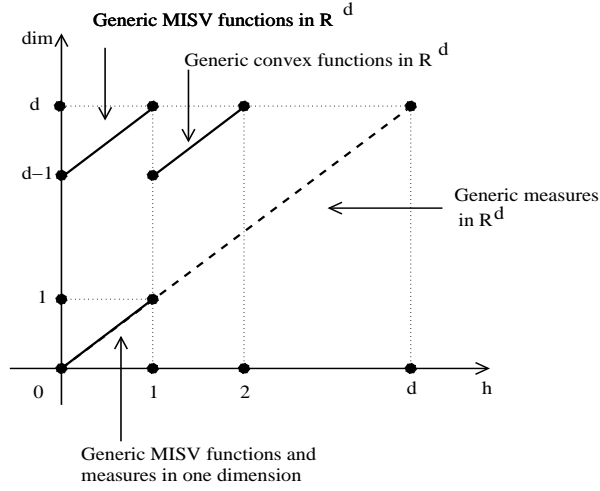}
  
    \caption{Typical spectra for measures, for MISV and for convex functions}
     \label{fig0}
  \end{center}
\end{figure}

 Another interesting class is constituted of the  continuous monotone increasing   in several variables (in short: MISV)  functions. These functions extend to higher dimensions in a different direction the one-dimensional monotone  functions. 
 A function $f: { [0,1]^ { d } } \to  { \ensuremath { \mathbb R } }$ is  MISV when  for all $i\in \{1,...,d\}$, the functions 
\begin{equation}
 \label{defgi}
f^{(i)}(t)=
f(x_{1},...,x_{i-1},t,x_{i+1},...,x_{d})
\end{equation}
are continuous monotone increasing. We use the notation
$$ { { \mathcal M } ^d } =\{ f\in C( { [0,1]^ { d } }) : f  \text{  MISV} \}.$$ The  space $ { { \mathcal M } ^d }$ is a separable complete metric space when equipped with the supremum $L^{ { \infty }}$ norm for functions. Typical MISV functions satisfy (see \cite{BUC,BSJMAA})
\begin{equation}
\label{*th1*}
d_f(h) = \begin{cases}  d-1+h & \mbox{ if } h \in [0,1]\\
 \ \ -\infty & \mbox{ otherwise}.
 \end{cases}
 \end{equation}

%
%

In Figure \ref{fig0} we  compare our new results about generic
continuous convex functions with the earlier results  we mentioned.

\begin{remark}
\label{rk1}
One cannot directly infer Theorems \ref{mainth1} and \ref{mainth2} by integrating MISV functions or measures on $\zu^d$. For instance, letting $f(x_{1},x_{2})=10(x_1^2+x_2^2)+x_{1}^{2}x_{2}^{2}$,  its second differential $d^{2}f(x_{1},x_{2},h_{1},h_{2})=(20+2x_{2}^{2})h_{1}^{2}+(20+2x_{1}^{2})h_{2}^{2}+4x_{1}x_{2}h_{1}h_{2}$ is positive definite for any $(x_{1},x_{2})\in [-1,1]^{2}$. Hence this function $f$ is strictly convex on $[-1,1]^{2}$,
but $ {\partial}_{1}f=20x_{1}+2x_{1}x_{2}^{2}$ is monotone only in $x_{1}$ and $ {\partial}_{2}f=20x_{2}+2x_{2}x_{1}^{2}$ is monotone only in $x_{2}$.
\end{remark}
\section{Preliminary Results}

\subsection{Basic notation}

If it is not stated otherwise we work in $ {\ensuremath {\mathbb R}}^{d}$. Points in the space are denoted by $ \xxx=(x_{1},...,x_{d})$. The $j$'th unit vector is denoted by 
$$ {{\mathbf e}}_{j}=(0,...,0,\underset{\underset{j}{\uparrow}}{1},0,...,0).$$

 Open balls in $\R^d$ are denoted by $B(\xxx,r)$ (to be distinguished from the balls $B_{\|\cdot\|}(f,\ep) $ in $\CC$).

\subsection{Local regularity results of continuous convex functions}

 First,   recall that   $ {{C^ {\infty}}}$ functions are dense in $ {\CC}$.
 
 \begin{remark}
 \label{rk2}
 To see this, take $ {\psi}\geq 0$   a $ {{C^ {\infty}}}$ function, which is $0$ outside ${\bf B}( {{\mathbf 0}},1)$, such that  $  \displaystyle  \int_{{\ensuremath {\mathbb R}}^{d}} {\psi}=1$. Put $ {\psi}_{\lll}=\lll^{d} {\psi}(\frac{1}{\lll} \xxx)$.
If $f$ is convex and continuous on $ {[-1,1]^d}$, then the convolution
${{{\overline {f}}}_{\lll}}( \xxx)=\int_{{[0,1]^d}}f( \xxx) {\psi}_{\lll}( {{\mathbf {y}}}- \xxx)d {{\mathbf {y}}}$ is convex on $[-1+\lll,1-\lll]^{d}$ and $f_{\lll}( \xxx)={{{\overline {f}}}_{\lll}} (\frac{1}{1-\lll} \xxx)$ is convex on $ {[-1,1]^d}$.
Using the uniform continuity of $f$ on $ {[-1,1]^d}$, one   easily sees that
$||f-f_{\lll}||\to0$ as $\lll\to 0.$ One concludes by using an obvious linear transformation. \end{remark}

 A first lemma allows one to control   the left and right  partial derivatives of all functions in a neighborhood of a   convex differentiable function in $\CC$. The notations  ${\partial}_{j,+}f$  and ${\partial}_{j,-}f$  are used for the right and left partial $j$-th derivatives of a convex function $f$.
  
\begin{lemma}\label{*djdiffd}
Suppose $f\in {\CC}\cap C^{1} ([0,1]^d)$ and $\ep>0$. There exists   $\varrho_{f,{\varepsilon}}>0$, such that for all 
$j\in  {\{1,...,d \}}$,  if $g\in B_{\|\cdot\|}(f,\varrho_{f,{\varepsilon}})$, then for every $x_{j}\in[ {\varepsilon},1- {\varepsilon}]$, $x_{i}\in[0,1]$, $i\in {\{1,...,d \} \setminus \{j \}}$ we have
\begin{equation}\label{*R3*b}
| {\partial}_{j,\pm} g( {{x_ {1} ,...,x_ {d}}})- {\partial}_{j}f( {{x_ {1} ,...,x_ {d}}})|< {\varepsilon}.
\end{equation} 
\end{lemma}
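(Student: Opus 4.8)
The plan is to exploit the fundamental sandwich inequality for convex functions: the one-sided partial derivatives of any convex $g$ are squeezed between symmetric difference quotients, and these difference quotients are stable under uniform perturbations of $g$. First I would fix the one-variable viewpoint. For fixed $\xxx$ and $j$, the slice $t\mapsto g(x_{1},\dots,x_{j-1},t,x_{j+1},\dots,x_{d})$ is convex, so for every $\delta>0$ with $x_{j}\pm\delta\in[0,1]$ one has the classical chain of inequalities
$$\frac{g(\xxx)-g(\xxx-\delta\mathbf{e}_{j})}{\delta}\ \le\ {\partial}_{j,-}g(\xxx)\ \le\ {\partial}_{j,+}g(\xxx)\ \le\ \frac{g(\xxx+\delta\mathbf{e}_{j})-g(\xxx)}{\delta}.$$
The hypothesis $x_{j}\in[\ep,1-\ep]$ guarantees that a single step size $\delta\le\ep$ keeps both $\xxx\pm\delta\mathbf{e}_{j}$ inside the cube simultaneously, uniformly in $\xxx$ and $j$; this is precisely the role of the interval $[\ep,1-\ep]$.

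Next I would use the $C^{1}$ regularity of $f$. Since each $\partial_{j}f$ is continuous on the compact cube $[0,1]^{d}$, it is uniformly continuous, and by the mean value theorem the difference quotients of $f$ satisfy
$$\left|\frac{f(\xxx\pm\delta\mathbf{e}_{j})-f(\xxx)}{\pm\delta}-\partial_{j}f(\xxx)\right|\le\omega(\delta),$$
where $\omega$ is a common modulus of continuity for all the $\partial_{j}f$, $j\in\{1,\dots,d\}$, independent of $\xxx$. Hence I can fix $\delta\le\ep$ so small that $\omega(\delta)<\ep/2$.

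With $\delta$ now frozen, I would choose the radius $\varrho_{f,\ep}$. If $\|g-f\|<\varrho_{f,\ep}$, then each difference quotient of $g$ differs from the corresponding one of $f$ by at most $2\varrho_{f,\ep}/\delta$, since the numerators differ by at most $2\varrho_{f,\ep}$. Setting $\varrho_{f,\ep}<\delta\ep/4$ and combining with the previous step, both outer terms in the sandwich lie within $\ep$ of $\partial_{j}f(\xxx)$. As ${\partial}_{j,-}g(\xxx)$ and ${\partial}_{j,+}g(\xxx)$ are trapped between these two terms, the desired estimate \eqref{*R3*b} follows for both signs at once.

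I do not expect a serious obstacle: the argument is a direct combination of the convex-function sandwich with the uniform continuity of $\partial_{j}f$. The only points requiring care are the uniform choice of $\delta$ over all admissible $\xxx$ and all coordinates $j$ (handled by compactness and taking the worst modulus $\omega$), and ensuring the final inequality is strict, which is automatic from the strict choice $\varrho_{f,\ep}<\delta\ep/4$.
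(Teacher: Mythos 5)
Your proof is correct and follows essentially the same route as the paper's: both arguments trap the one-sided derivatives $\partial_{j,\pm}g(\xxx)$ between difference quotients of $g$ over intervals of length bounded below, control those quotients via $\|g-f\|<\varrho$ with $\varrho$ proportional to $\ep$ times the step size, and compare with $\partial_j f(\xxx)$ using the uniform continuity of the partial derivatives. The only difference is cosmetic: the paper anchors its quotients to a fixed partition of $[0,1]$ whose mesh is adapted to the oscillation of $\partial_j f$, while you use a single symmetric step $\delta\le\ep$ around $\xxx$, which is a slightly cleaner bookkeeping of the same idea.
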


\begin{proof} It is enough to fix one $j \in \{1,...,d\}$. 
Recall that  $f\in {\CC}\cap C^{1} ({[0,1]^d})$ implies that $ {\partial}_{j}f$ is non-decreasing in $x_{j}$ and is uniformly continuous on $ {[0,1]^d}$.

Using this uniform continuity, there exists  a partition $0=x_{j,0}<x_{j,1}<...<x_{j,K}=1$
 such that $x_{j,1}<\varepsilon$  and for every $x_{i}\in [0,1]$ with $i\in {\{1,...,d \} \setminus \{j \}}$, one has for every $l=1,...,K$, 
 \begin{equation}\label{*R3*a}
  {\partial}_{j}f(x_{1},...,x_{j-1}, x_{j,l},x_{j+1},...,x_{d}) -  {\partial}_{j}f(x_{1},...,x_{j-1}, x_{j,l-1},x_{j+1},...,x_{d})
 <\frac{{\varepsilon}}{4}.
 \end{equation}

Set 
\begin{equation}\label{*R4*b}
\varrho_{f, \ep,j}=\frac{{\varepsilon}}{4}\min_{l=2,..., K}(x_{j,l}-x_{j,l-1}).
\end{equation}

Consider any function $g\in B_{\|\cdot\|}(f,\varrho_{f, \ep,j})$, and fix $ \xxx=( {{x_ {1} ,...,x_ {d}}}) \in \zu^d$, so that $x_{j}\in[ {\varepsilon},1- {\varepsilon}]$.

There exists an integer $l\in \{1,..., K-1\}$ such that  $x_{j}\in [x_{j,l},x_{j,l+1}].$

Set  $ \xxx(l')=(x_{1},...,x_{j-1}, x_{j,l'},x_{j+1},...,x_{d}).$

Since the two functions $ t\mapsto g(x_{1},...,x_{j-1},  t ,x_{j+1},...,x_{d})$ and
$  t\mapsto f(x_{1},...,x_{j-1},  t ,x_{j+1},...,x_{d})$ are   real convex,  one has
\begin{eqnarray}
\nonumber 
 {\partial}_{j,-}  g( \xxx) & \geq   & {\partial}_{j,-} g( \xxx(l))\geq 
\frac{g( \xxx(l))-g( \xxx(l-1))}{x_{j,l}-x_{j,l-1}} 
\\
\nonumber
&\geq & \frac{f( \xxx(l))-f( \xxx(l-1))-2\varrho_{f,{\varepsilon},j}}{x_{j,l}-x_{j,l-1}}\\
 \nonumber
&\geq & \frac{f( \xxx(l))-f( \xxx(l-1))}{x_{j,l}-x_{j,l-1}}-\frac{{\varepsilon}}{2}\\
\label{*R4*a}& \geq&  {\partial}_{j} f( \xxx(l-1))-\frac{{\varepsilon}}{2} .
\end{eqnarray}

Thanks to the  convexity of $  t\mapsto f(x_{1},...,x_{j-1},  t ,x_{j+1},...,x_{d})$,    equation  \eqref{*R3*a}
gives
$$
 {\partial}_{j}f( \xxx)\leq  {\partial}_{j}f( \xxx(l+1))< {\partial}_{j}f ( \xxx(l-1))+2\frac{{\varepsilon}}{4}.
$$

Hence, one can continue \eqref{*R4*a} to obtain
$$ {\partial}_{j,-} g( \xxx)> {\partial}_{j}f( \xxx)-\frac{{\varepsilon}}{2}-\frac{{\varepsilon}}{2}= {\partial}_{j}f ( \xxx)- {\varepsilon}.$$

Moreover, a  similar argument can show that
$$ {\partial}_{j,+}g( \xxx)< {\partial}_{j}f ( \xxx)+ {\varepsilon}.$$ 
This ends the proof, since ${\partial}_{j,-}g( \xxx)\leq {\partial}_{j,+}g( \xxx)$.

The conclusion follows by taking $\varrho_{f,{\varepsilon}} = \min(\varrho_{f,{\varepsilon},j}: j=1,...,d)$.
\end{proof}

The one-dimensional version of Lemma \ref{*djdiffd} is stated as follows.

\begin{lemma}\label{*djdiffo}
Suppose $f\in  {\cac\cac^{1}}\cap C^{1}([0,1])$. For $ {\varepsilon}>0$   there exist
$ {\varepsilon}>0$ and $\varrho_{{\varepsilon},f}>0$  such that for any $g\in B_{\|\cdot\|}(f,\varrho_{{\varepsilon},f})$
and $x\in[ {\varepsilon},1- {\varepsilon}]$ we have
\begin{equation}\label{*R6*a}
|g'_{\pm} (x)-f'(x)|< {\varepsilon}.
\end{equation}
\end{lemma}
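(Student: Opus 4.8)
The plan is to observe that this is exactly the one-dimensional instance ($d=1$) of Lemma \ref{*djdiffd}, with the one-sided partial derivatives $\partial_{1,\pm}g$ and $\partial_1 f$ relabelled as the one-sided derivatives $g'_\pm$ and $f'$. So I would either invoke Lemma \ref{*djdiffd} directly with $d=1$ and translate the notation, or, for completeness, reproduce its short argument in the simplified setting. (I read the statement as: for a fixed $\varepsilon>0$ there exists $\varrho_{\varepsilon,f}>0$ with the stated property.)

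First I would fix $\varepsilon>0$. Since $f$ is convex and $f\in C^1([0,1])$, its derivative $f'$ is non-decreasing and, being continuous on the compact interval $[0,1]$, uniformly continuous. Using this uniform continuity, I would select a partition $0=x_0<x_1<\cdots<x_K=1$ with $x_1<\varepsilon$ and such that $f'(x_l)-f'(x_{l-1})<\varepsilon/4$ for every $l$, and then set $\varrho_{\varepsilon,f}=\frac{\varepsilon}{4}\min_{l=2,\dots,K}(x_l-x_{l-1})$.

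Next, for any $g\in B_{\|\cdot\|}(f,\varrho_{\varepsilon,f})$ and any $x\in[\varepsilon,1-\varepsilon]$, I would locate an index $l$ with $x\in[x_l,x_{l+1}]$; the restriction to $[\varepsilon,1-\varepsilon]$ guarantees that the neighbouring cells used below exist. The convexity of $g$ gives the sandwich $g'_-(x)\ge g'_-(x_l)\ge \frac{g(x_l)-g(x_{l-1})}{x_l-x_{l-1}}$; replacing $g$ by $f$ in the numerator costs at most $2\varrho_{\varepsilon,f}/(x_l-x_{l-1})\le \varepsilon/2$ by the choice of $\varrho_{\varepsilon,f}$ together with $\|g-f\|<\varrho_{\varepsilon,f}$, so that $g'_-(x)\ge f'(x_{l-1})-\varepsilon/2$. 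Combining this with $f'(x)\le f'(x_{l+1})\le f'(x_{l-1})+\varepsilon/2$, which follows from the monotonicity of $f'$ and the partition property, yields $g'_-(x)>f'(x)-\varepsilon$. A symmetric estimate from above, using the right-hand cell, gives $g'_+(x)<f'(x)+\varepsilon$, and since $g'_-(x)\le g'_+(x)$ both one-sided derivatives lie within $\varepsilon$ of $f'(x)$, which is \eqref{*R6*a}.

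There is essentially no hard step here, precisely because the result is a genuine specialization of Lemma \ref{*djdiffd}; the only point requiring care—already handled by the construction—is the double use of convexity (once to trap each one-sided derivative between difference quotients of $g$, once to compare those difference quotients with values of $f'$ at the partition nodes) together with the calibration of $\varrho_{\varepsilon,f}$ against the minimal mesh of the partition. This calibration is exactly what converts uniform sup-norm closeness of $g$ to $f$ into closeness of their one-sided derivatives on the interior $[\varepsilon,1-\varepsilon]$.
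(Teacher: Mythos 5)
Your proposal is correct and matches the paper exactly: the paper gives no separate proof of this lemma, presenting it as the one-dimensional instance of Lemma \ref{*djdiffd}, and your reproduced argument (uniform continuity of $f'$, the partition with $x_1<\varepsilon$, the calibration $\varrho_{\varepsilon,f}=\frac{\varepsilon}{4}\min_l(x_l-x_{l-1})$, and the two-sided convexity sandwich) is a verbatim specialization of that lemma's proof to $d=1$.
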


Next, one compares the pointwise exponents of a differentiable convex function $f$ and its derivative $f'$. It is a general property that $h_{f'}(x) \leq h_f(x)-1$, for every differentiable $f$. A surprising property is that equality necessarily holds when   $f$ is convex and $h_f(x)\in [1,2)$,

\begin{lemma}\label{*hregd}
If $f$ is convex and differentiable on $(a,b)$  and $h_f(x)\in [1,2)$ for some $x\in (a,b)$, then   $h_{f}(x)= h_{f'}(x)+1$.\end{lemma}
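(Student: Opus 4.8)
The plan is to prove the inequality $h_{f'}(x) \le h_f(x)-1$ first (the easy, general direction valid for any differentiable $f$), and then to establish the reverse inequality $h_{f'}(x) \ge h_f(x)-1$ using convexity, which is where the whole content of the lemma lies. Write $h = h_f(x) \in [1,2)$. For the general direction, suppose $f' \in C^{\alpha}(x)$ for some $\alpha \ge 0$, meaning $|f'(x') - Q(x'-x)| \le C|x'-x|^{\alpha}$ for a polynomial $Q$ of degree $< [\alpha]$. Integrating this estimate from $x$ to $x'$ produces a control on $f(x') - f(x) - \int_x^{x'} Q$, and the primitive of $Q$ is a polynomial of degree $< [\alpha]+1 = [\alpha+1]$; this yields $f \in C^{\alpha+1}(x)$, hence $h_f(x) \ge h_{f'}(x)+1$, i.e. $h_{f'}(x) \le h_f(x)-1$.

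For the reverse inequality, I would set $\beta = h - 1 \in [0,1)$ and aim to show $f' \in C^{\gamma}(x)$ for every $\gamma < \beta$, which gives $h_{f'}(x) \ge \beta = h_f(x)-1$. Since $h_f(x) = h \ge 1$, the function $f$ is differentiable at $x$, and because $h < 2$ the relevant Taylor polynomial in \eqref{defpoint} has degree $< 2$, so it is the affine map $P(t) = f(x) + f'(x)\, t$; the defining estimate reads $|f(x') - f(x) - f'(x)(x'-x)| \le C_h |x'-x|^{h-\eta}$ for every small $\eta > 0$ and $x'$ near $x$. The heart of the argument is to convert this \emph{zeroth-order} (function-value) control into a \emph{first-order} (derivative) control, and this is exactly where convexity is indispensable: for a general differentiable function no such conversion exists, but for a convex function the monotonicity of $f'$ lets one bound the increment $f'(x') - f'(x)$ from above and below by difference quotients of $f$ evaluated at nearby points.

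Concretely, I expect the key step to run as follows. Fix $x' > x$ (the case $x' < x$ is symmetric) and set $\delta = |x'-x|$. By convexity $f'$ is nondecreasing, so $f'(x')$ is squeezed between the slopes $\frac{f(x')-f(x'-s)}{s}$ and $\frac{f(x'+s)-f(x')}{s}$ for any $s>0$ with $x'\pm s \in (a,b)$. Choosing $s$ comparable to $\delta$ (say $s = c\,\delta$ for a fixed small constant) and substituting the estimate $f(y) = f(x) + f'(x)(y-x) + O(|y-x|^{h-\eta})$ at the four points $x$, $x'$, $x'-s$, $x'+s$, the affine parts $f(x)+f'(x)(y-x)$ combine to give exactly $f'(x)$ in the difference quotient, while the error terms are $O(\delta^{h-\eta})$. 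Dividing by $s \sim \delta$ produces $|f'(x') - f'(x)| = O(\delta^{h-1-\eta}) = O(\delta^{\gamma})$ for $\gamma = h-1-\eta < \beta$, giving $f' \in C^{\gamma}(x)$ as required.

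The main obstacle — and the only genuinely delicate point — is organizing the two-sided squeeze so that the affine contributions cancel cleanly while keeping the error exponent sharp; one must choose the auxiliary increment $s$ of the right order in $\delta$ and verify that both comparison points stay inside $(a,b)$. I would also be careful that the argument is purely local (it uses $h_f(x) \ge 1$ only to guarantee differentiability and the affine Taylor polynomial, and $h_f(x) < 2$ only to ensure the polynomial $P$ has no quadratic term, so that the derivative estimate is not swamped by a nonzero second-order coefficient). Combining the two inequalities yields $h_{f'}(x) = h_f(x) - 1$, completing the proof.
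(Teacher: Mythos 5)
Your proof is correct, but it is organized differently from the paper's. The paper takes the inequality $h_{f'}(x)\le h_f(x)-1$ for granted (it is the ``general property'' stated just before the lemma, your integration argument) and then proves $h_f(x)\le h_{f'}(x)+1$ by \emph{propagating irregularity upward}: from a sequence $x_n\to x$ witnessing $|f'(x_n)-f'(x)|>|x_n-x|^{h_{f'}(x)+1/n}$, it uses the supporting-line inequalities $f(y)\ge f(z)+f'(z)(y-z)$ at $z=x$ and $z=x_n$, evaluated at the doubled point $x_n'=x+2(x_n-x)$, to produce points where $f$ deviates from its tangent by $\gtrsim |x_n'-x|^{h_{f'}(x)+1+1/n}$. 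You prove the contrapositive inequality $h_{f'}(x)\ge h_f(x)-1$ by \emph{transferring regularity downward}: the chord-slope squeeze $\frac{f(x')-f(x'-s)}{s}\le f'(x')\le \frac{f(x'+s)-f(x')}{s}$ with $s\sim|x'-x|$ converts the estimate $|f(y)-f(x)-f'(x)(y-x)|\le C|y-x|^{h-\eta}$ into $|f'(x')-f'(x)|\le C'|x'-x|^{h-1-\eta}$. Both arguments hinge on the same convexity input (monotonicity of $f'$ and comparison of $f'$ with difference quotients of $f$), and your identification of where $h\ge 1$ and $h<2$ enter is accurate. The trade-off: the paper's version is shorter and only needs one convexity inequality applied twice, while yours yields a genuine quantitative modulus of continuity for $f'$ at $x$ (an actual $C^{h-1-\eta}(x)$ membership), at the cost of handling the two-sided squeeze and both signs of $x'-x$. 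Either route is acceptable; just make sure, when invoking the definition of $C^{h-\eta}(x)$, to justify that the degree-one polynomial there must be $f(x)+f'(x)(x'-x)$ (evaluate at $x'=x$ and take difference quotients), which you implicitly do.
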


\begin{proof}
It is enough to prove that $h_{f}(x)\leq h_{f'}(x)+1$. Since $h_f(x)\in [1,2)$, necessarily $h=h_{f'}(x)<1$. Hence, there exists  a sequence $(x_{n})_{n\geq 1}$ converging to $ x$  such that 
$|f'(x_{n})-f'(x)|>|x_{n}-x|^{h+\frac{1}{n}}.$
Without limiting generality, suppose that $x_{n}>x$.
By the monotonicity of $f'$, one has $$f'(x_{n})>f'(x)+(x_{n}-x)^{h+\frac{1}{n}}.$$

By convexity of $f$,
$$
f(x_{n})\geq f(x)+f'(x)(x_{n}-x).
$$
Setting $x_{n}'=x+2(x_{n}-x)$, and using  again the convexity of $f$ at $x_{n}$, one gets
\begin{eqnarray*}
f(x_{n}') & \geq &  f(x_{n})+f'(x_{n})(x_{n}'-x_{n}) \\
&\geq &   f(x)+f'(x)(x_{n}-x)+(f'(x)+(x_{n}-x)^{h+\frac{1}{n}})(x_{n}'-x_{n})\\
&\geq &  f(x)+f'(x)(x_{n}'-x)+(x_{n}-x)^{h+\frac{1}{n}}\frac{1}{2}(x_{n}'-x)\\
&= &  f(x)+f'(x)(x_{n}'-x)+\frac{1}{2^{h+1+\frac{1}{n}}}(x_{n}'-x)^{h+1+\frac{1}{n}}.
\end{eqnarray*}
This implies $h_{f}(x)\leq h+1$, hence the result.
\end{proof}

One investigates what happens for non-differentiable convex functions.

\begin{lemma}\label{*hregd2}
If $f$ is convex  on $(a,b) \subset \R$  and $h_f(x)\in [1,2)$ for some $x\in (a,b)$, then   $\min(h_{f'_+}(x), h_{f'_-}(x)) \leq h_f(x)-1$.\end{lemma}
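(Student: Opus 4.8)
The plan is to mirror the proof of Lemma~\ref{*hregd}, but to work with the one-sided derivatives $f'_+$ and $f'_-$ (which exist and are non-decreasing for every convex $f$) and to keep careful track of the side of $x$ on which the irregularity of $f$ is detected. Write $h^\ast=h_f(x)\in[1,2)$. It suffices to prove that at least one of $h_{f'_+}(x)\le h^\ast-1$ or $h_{f'_-}(x)\le h^\ast-1$ holds, since then the minimum is $\le h^\ast-1$.

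First I would dispose of the non-differentiable case. If $f'_-(x)<f'_+(x)$, then $f$ has a corner at $x$: every supporting line forces the best affine approximation error to be comparable to $|x'-x|$, so necessarily $h^\ast=1$. Moreover $t\mapsto f'_+(t)$ has left limit $f'_-(x)<f'_+(x)$ at $x$, hence is discontinuous there, which forces $h_{f'_+}(x)=0\le h^\ast-1$. This settles the corner case, so from now on I assume $f$ is differentiable at $x$, with $f'(x)=f'_+(x)=f'_-(x)$.

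Next I would extract a one-sided witnessing sequence, exactly as in Lemma~\ref{*hregd}. Since $h_f(x)=h^\ast$, for each $n$ the function $f$ does not belong to $C^{h^\ast+1/n}(x)$; as $f$ is convex with supporting tangent $L(x')=f(x)+f'(x)(x'-x)\le f(x')$ at $x$ (so the tangent is the relevant degree-one comparison polynomial), the failure of $C^{h^\ast+1/n}(x)$ yields a point $x_n$, arbitrarily close to $x$, with
\[
f(x_n)-f(x)-f'(x)(x_n-x) > |x_n-x|^{h^\ast+1/n},
\]
the left-hand side being $\ge 0$ by the supporting property. Infinitely many of the $x_n$ lie on the same side of $x$; passing to a subsequence, assume $x_n>x$ (the case $x_n<x$ is symmetric and produces a bound on $h_{f'_-}(x)$ instead).

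Finally I would convert the deviation of $f$ from its tangent into a deviation of $f'_+$ from $f'_+(x)$. Using that a convex function is the integral of its right derivative and that $f'_+$ is non-decreasing,
\[
f(x_n)-f(x)-f'(x)(x_n-x)=\int_x^{x_n}\bigl(f'_+(t)-f'_+(x)\bigr)\,dt\le (x_n-x)\bigl(f'_+(x_n)-f'_+(x)\bigr),
\]
which combined with the previous display gives $f'_+(x_n)-f'_+(x)\ge |x_n-x|^{\,h^\ast-1+1/n}$. The end-of-proof argument of Lemma~\ref{*hregd} then yields $h_{f'_+}(x)\le h^\ast-1$: if $f'_+\in C^{h''}(x)$ for some $h''$ with $h^\ast-1<h''<1$, the constant comparison polynomial must equal $f'_+(x)$, so $|x_n-x|^{h^\ast-1+1/n}\le C|x_n-x|^{h''}$, which is impossible as $n\to\infty$. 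In the symmetric case $x_n<x$ the same computation with $\int_{x_n}^x\bigl(f'(x)-f'_+(t)\bigr)\,dt\le (x-x_n)\bigl(f'_-(x)-f'_-(x_n)\bigr)$ gives $h_{f'_-}(x)\le h^\ast-1$. Either way $\min\bigl(h_{f'_+}(x),h_{f'_-}(x)\bigr)\le h^\ast-1$.

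The genuinely \emph{delicate} point, and the reason the clean equality of Lemma~\ref{*hregd} degrades here to an inequality with a minimum, is that the irregularity of $f$ is only guaranteed along a sequence approaching $x$ from one fixed side, so one controls only one of the two one-sided derivatives. A secondary point of care is the definitional convention (treating the supporting tangent as the degree-one comparison polynomial, consistently with the proof of Lemma~\ref{*hregd}) and the separate verification of the corner case.
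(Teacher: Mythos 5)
Your proof is correct and follows essentially the same route as the paper's: identify the affine comparison polynomial with the tangent, extract a one-sided witnessing sequence from the failure of $C^{h^*+\ep}(x)$, and use monotonicity of the one-sided derivatives (your integral bound is exactly the paper's chord-slope inequality $f'_+(x+y_n)\ge \frac{f(x+y_n)-f(x)}{y_n}$) to transfer the deviation to $f'_+$ or $f'_-$, with the side of approach dictating which one you control. The only cosmetic differences are that you treat the corner case $f'_-(x)<f'_+(x)$ explicitly where the paper folds it into the ``obvious'' case $h_f(x)=1$, and that you identify the polynomial's slope with $f'(x)$ via the supporting-line property rather than by sandwiching the difference quotients as the paper does.
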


\begin{proof}
When $h_{f}(x)=1$ the lemma is obvious. Set $h=h_f(x)>1$, and let $\ep>0$ so that $h-\ep>0$. By definition, there exists $M\in \R$ such that one has
$$|f(x+y)-f(x) - M y|\leq  |y|^{h-\ep}$$
for every small $y$, and there exists a sequence $(y_n)_{n\geq 1}$  converging to zero such that 
$$|f(x+y_n)-f(x) -M y_n|\geq  |y_n|^{h+\ep}.$$
Hence,
$$ |y_n|^{h+\ep-1} \leq   \left| \frac{f(x+y_n)-f(x)}{y_n}  -M \right|\leq  |y_n|^{h-\ep-1}.$$
Since the left and right derivatives $f'_+(x)$ and $f'_-(x)$ both exist, they both equal $M = f'(x)$. 

Assume, without loss of generality, that there are infinitely many positive $y_n$'s. For every $y_n$, 
$$ f'_+(x+y_n) - f'_+(x) \geq  \frac{f(x+y_n)-f(x)}{y_n}  - f'_+(x)   \geq  |y_n|^{h+\ep-1},$$
thus  $h_{f'_+}(x) \leq h+\ep-1$, which gives the result.
\end{proof}

We also prove the following proposition, which somehow asserts that a convex function cannot have exceptional isolated directional  pointwise regularity.

\medskip

For this, consider  the $d$-dimensional unit sphere $S_d=\{\xxx\in \R^d: \|\xxx\| =1\}$. Then, we select a finite set of pairwise distinct points $(\zz_1, \zz_2, ...\zz_N) \in (S_d)^N $ for some integer $N\geq1$ such that the convex hull of $\{\zz_1, \zz_2, ...\zz_N\}$ contains the $d$-dimensional  ball $B({\bf 0},  1/2)$.

Let us choose $\ep_c>0$ so small that :
\begin{itemize}
\smallskip\smallskip
\item 
$0<\ep_c \leq \frac{1}{1000}\min (\|\zz_i-\zz_j\|, \ i\neq j,  \ i,j\in \{1,...,N\} )$.
\smallskip\smallskip
\item
Setting for every $i\in \{1,...,N\}$
\begin{equation}
\label{defCI}
C_i = S_d \cap B(\zz_i,\ep_c),
\end{equation}
then  for any choice of $\zz'_i \in C_i$, the convex hull of $\{\zz'_1, \zz'_2, ...\zz'_N\}$ contains the ball $B({\bf 0},  1/4)$.

\end{itemize}

 
\begin{proposition}
\label{gototheaxis}
If $h_f(\xxx) = h$, then there exists $i\in \{1,...,N\}$ such that for every $\zz'_i \in C_i$ (see \eqref{defCI}), the restriction of $f $ to the straight line passing through $\xxx$ parallel to  the vector $\zz'_i $ has a pointwise H\"older exponent equal to  $h$.
\end{proposition}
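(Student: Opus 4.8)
The plan is to subtract a supporting hyperplane, reduce the statement to a radial growth problem for a nonnegative convex function, and then exploit the fact that the $\zz_i$ (and, after perturbation, the $\zz'_i$) surround the origin. I would first treat the main range $1<h<2$, where $f$ is automatically differentiable at $\xxx$. Let $P$ be the tangent affine map of $f$ at $\xxx$ and set $g(\mathbf y)=f(\xxx+\mathbf y)-P(\xxx+\mathbf y)$. By convexity $g\geq 0$, $g(\mathbf 0)=0$, and $g$ is convex. Since for $1<h<2$ the tangent is the \emph{unique} admissible degree-$\le 1$ polynomial in the definition of the pointwise exponent, it is optimal, so that $h=\liminf_{r\to 0^+}\frac{\log G(r)}{\log r}$ where $G(r)=\max_{|\mathbf y|\le r}g(\mathbf y)$ (the maximum being attained on the sphere $|\mathbf y|=r$, as $g$ is convex). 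Moreover, for each $\mathbf v\in S_d$ the restriction $s\mapsto f(\xxx+s\mathbf v)$ is convex with tangent $s\mapsto P(\xxx+s\mathbf v)$, so its pointwise exponent at $0$ equals $\psi(\mathbf v):=\liminf_{s\to 0}\frac{\log g(s\mathbf v)}{\log|s|}$.

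\textbf{Monotonicity and contrapositive.} Since $g(s\mathbf v)\le G(|s|)$, one has $\psi(\mathbf v)\ge h$ for every $\mathbf v$, so a direction is \emph{bad} (restriction exponent exactly $h$) precisely when $\psi(\mathbf v)\le h$. The goal is to find $i_0$ with $\psi\equiv h$ on all of $C_{i_0}$. I would argue by contradiction: if no cap were entirely bad, then each $C_i$ would contain a \emph{regular} direction $\zz'_i\in C_i$ with $\psi(\zz'_i)>h$. Choose $\beta$ with $h<\beta<\min_{1\le i\le N}\psi(\zz'_i)$. By definition of $\psi$ and finiteness of the index set there is a uniform $\delta>0$ with $g(s\zz'_i)\le|s|^{\beta}$ for all $0<|s|\le\delta$ and all $i$.

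\textbf{Covering step (the core).} Fix $r\le\delta/4$ and $|\mathbf y|=r$. By the choice of $\ep_c$ the convex hull of $\{\zz'_1,\dots,\zz'_N\}$ contains $B(\mathbf 0,1/4)$, hence $\mathbf y\in B(\mathbf 0,r)=4r\cdot B(\mathbf 0,1/4)\subset\mathrm{conv}\{4r\zz'_1,\dots,4r\zz'_N\}$. Writing $\mathbf y=\sum_i\lambda_i(4r\zz'_i)$ with $\lambda_i\ge 0$, $\sum_i\lambda_i=1$, convexity of $g$ yields
\[
g(\mathbf y)\le\sum_i\lambda_i\,g(4r\zz'_i)\le\max_i g(4r\zz'_i)\le(4r)^{\beta}.
\]
Taking the maximum over $|\mathbf y|=r$ gives $G(r)\le 4^{\beta}r^{\beta}$, so $\liminf_{r\to 0}\frac{\log G(r)}{\log r}\ge\beta>h$, contradicting $h_f(\xxx)=h$. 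Hence some cap $C_{i_0}$ is entirely bad, i.e. $\psi(\zz'_{i_0})=h$ for every $\zz'_{i_0}\in C_{i_0}$, and through the identity $\phi_{1D}=\psi$ this is exactly the assertion. Note that it is here that the perturbed surrounding property ($\mathrm{conv}\{\zz'_i\}\supset B(\mathbf 0,1/4)$ for \emph{all} choices $\zz'_i\in C_i$) is essential: it lets the regular directions still cover every small sphere.

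\textbf{Main obstacle and remaining ranges.} The delicate point is the identification of the one-dimensional restriction exponent with the radial $g$-growth exponent $\psi(\mathbf v)$; this is what makes the contrapositive legitimate (otherwise the line might use a better polynomial than $P$ restricted to it, spuriously raising the exponent). For $1<h<2$ differentiability plus convexity force the tangent to be simultaneously optimal for $f$ and for every restriction, giving $\phi_{1D}=\psi$ exactly. The boundary $h=2$ follows from the same covering estimate with $\beta=2$. For $h\in[0,1]$ I would replace the tangent by a supporting hyperplane and work with one-sided directional derivatives; roughness is already visible at first order, so this case is easier (though the clean equality $\phi_{1D}=\psi$ must be replaced by one-sided estimates at a corner). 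I expect the genuinely hard range to be $h>2$: subtracting only the affine part yields $\psi\equiv 2\neq h$, so one must subtract the second-order Taylor polynomial, and $f$ minus a quadratic need not be convex. There I would first show $h_f(\xxx)>2$ forces the Hessian to vanish at $\xxx$, and then run a variant of the covering argument on second-order increments; this is where I anticipate the bulk of the technical work.
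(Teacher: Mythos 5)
Your covering argument for $1<h<2$ is correct and rests on exactly the same mechanism as the paper's proof, just run in contrapositive form: the paper fixes the witnessing sequence $(\xxx_n)$ with $|f(\xxx_n)|\geq |\xxx_n|^{h+\ep}$, rescales the caps to $4\|\xxx_n\|\cdot C_i$, and argues that if every rescaled cap contained a point where $|f|$ is smaller than $|f(\xxx_n)|$, then convexity plus the surrounding property would force $|f(\xxx_n)|\leq \max_i |f(\zz_i')|$, a contradiction; it then pigeonholes in $n$ (and, implicitly, in $\ep$) to fix a single cap. Your version replaces this scale-by-scale argument with the single global estimate $G(r)\leq (4r)^{\beta}$, which is arguably cleaner and avoids the double pigeonhole; both proofs use identical ingredients (nonnegativity and convexity of $f$ minus a supporting affine map, and the stability of the surrounding property under perturbations inside the caps $C_i$).

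The genuine gap is exactly where you flag it: $h\geq 2$ is not proved, and the claim that ``$h=2$ follows from the same covering estimate with $\beta=2$'' does not work. At $h=2$ your contrapositive needs the implication ``restriction exponent $>2$ implies $\psi>2$'', which is false: a one-variable convex function such as $s\mapsto cs^{2}+|s|^{3}$ with $c>0$ has $\psi=2$ but pointwise exponent $3$, because the optimal polynomial acquires a quadratic term; and taking only $\beta=2$ yields $G(r)\leq (4r)^{2}$, i.e.\ $h_f(\xxx)\geq 2$, which is not a contradiction with $h_f(\xxx)=2$. This is precisely what your identification $\phi_{1D}=\psi$ (valid only when the exponents in play are $<2$) is hiding. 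The paper's formulation partially sidesteps the issue by producing explicit lower bounds $|f(r_n\zz_i')|\geq (|r_n\zz_i'|/4)^{h+\ep}$ for the Taylor remainder along every line of the selected cap, which bounds the restriction exponent directly without passing through $\psi$ --- though for $h\geq 2$ the paper's own convex-hull step also silently uses nonnegativity of the remainder, which fails once a polynomial of degree $\geq 2$ has been subtracted. Since the proposition is only invoked for $h\in(1,2)$ (in the proof of Proposition \ref{*C1dim}), your argument covers everything that is actually used; but as a proof of the statement for all $h$ it is incomplete, and completing the case $h>2$ along the lines you sketch (vanishing Hessian plus a second-order covering argument) is genuinely additional work that neither you nor, in truth, the paper carries out.
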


\begin{proof}
Let $n$ be such that $n\leq h<n+1$. 
We assume without loss of generality that $\xxx={\bf 0}$, $f({\bf 0}) =0$, $D^{k}f({\bf 0},...,{\bf 0})=0$ for every $k\in \{1,..., n\}$.   Let $\ep>0$. By definition, for every $\xxx$ close to ${\bf 0}$, $|f(\xxx)| \leq |\xxx|^{h-\ep}$, and there exists   a sequence $(\xxx_n = (x_{n,1},...,x_{n,d}))_{n\geq 1}$ of elements in $\R^d$, converging to ${\bf 0}$,  such that
\begin{equation}
\label{defyn}
|\xxx_n|^{h+\ep} \leq | f(\xxx_n) | \leq |\xxx_n|^{h-\ep}.
\end{equation}

Consider such an element $\xxx_n$, and the sets  $(C_{n,i}:= 4\|\xxx_n\|\cdot C_i)_{i=1,..., N}$. Let us prove that there exists $i_n\in \{1,..., N\}$ such that for every 
$n\in \N$  and
$\zz'_{i_n } \in C_{n,i_n}$, $|f(\zz'_{i_n})|\geq (|\zz'_{i_n}|/4)^{h+\ep}$.

Assume first  that for every $i\in \{1,..., N\}$, there exists  $\zz'_i \in C_{n,i}$ such that $|f(\zz'_i)|< |f(\xxx_n)|$.
By construction, the ball $B({\bf 0}, \|\xxx_n\|)$ is included in the convex hull of these points $(\zz'_i)_{i=1,..., N}$. By convexity, this would imply that $|f(\xxx_n)| \leq \max( |f(\zz'_i)|: i=1,...,N)$, hence a contradiction.

Hence, there exists an  $i_n\in \{1,...,N\}$ such that  for every $\zz'_{i_n} \in C_{n,{i_n}}$, $|f(\zz'_{i_n})|\geq |f(\xxx_n)| \geq |\xxx_n|^{h+\ep} = (|\zz'_{i_n}|/4)^{h+\ep}$.

\smallskip

 Turning to a subsequence, one can assume that $i_n$ is constant, equal to $i\in\{1,...,N\}$.
 
 Now, as a consequence of what precedes, for every vector $\zz'_i\in C_i$, there exists an infinite number of values $(r_n=|\xxx_n|)_{n\geq 1}$ such that $|f(r_n \zz'_i)|\geq (|r_n \zz'_i|/4)^{h+\ep}$. Hence, the restriction of $f$ to the straight line passing through ${\bf 0}$ parallel to  $\zz'_i$ has a pointwise H\"older exponent less than $h+\ep$. Since this holds for every $\ep>0$, and obviously this exponent is bounded below by $h$ (i.e. the exponent of $f$), one concludes that this restriction has exactly exponent $h$. 
 \end{proof}

\section{First typical properties of continuous convex functions}

Based on Lemma \ref{*djdiffd} it is very easy to see that the typical function in $ {\CC}$ is continuously differentiable on $ {(0,1)^d}$. This was proved in \cite{Gruber,Howe} for instance. We give another proof for completeness. 
 \begin{proposition}
 \label{*gendiff}
 There is a dense $G_{{\delta}}$ set $ {{\cal G}} $ in $ {\CC}$   such that   every $f\in  {{\cal G}}$   is continuously differentiable  on $ {(0,1)^d}$.
 \end{proposition}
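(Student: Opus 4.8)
**Proposal for proving Proposition \ref{*gendiff}.**

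The plan is to exhibit the residual set $\mathcal G$ as a countable intersection of dense open sets, using Lemma \ref{*djdiffd} as the main tool. The key point is that convexity forces the one-sided partial derivatives to be monotone, so failure of differentiability at a point is a \emph{quantitative} gap between the right and left derivatives, and this gap can be controlled uniformly in a small ball around any smooth function.

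First I would set up the skeleton of the argument. For each pair of integers $(m,k)$ with $m,k\geq 1$, define
$$
\mathcal U_{m,k}=\Big\{ f\in\CC : \ \forall\, j\in\{1,\dots,d\},\ \forall\, \xxx\in[\tfrac1k,1-\tfrac1k]^d,\ \ \partial_{j,+}f(\xxx)-\partial_{j,-}f(\xxx)<\tfrac1m\Big\}.
$$
Each $\mathcal U_{m,k}$ should be shown to be open and dense, and I would then set
$$
\mathcal G=\bigcap_{m\geq 1}\bigcap_{k\geq 1}\mathcal U_{m,k}.
$$
For a function $f$ in this intersection, at every interior point $\xxx\in(0,1)^d$ (which lies in some $[\tfrac1k,1-\tfrac1k]^d$) all one-sided partials coincide, so $f$ is differentiable there; continuity of the gradient then follows from the standard fact that the gradient of a convex function is continuous wherever it exists. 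Thus membership in $\mathcal G$ gives exactly $C^1$ on $(0,1)^d$.

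The two things to verify are density and openness of each $\mathcal U_{m,k}$. For \emph{density}, I would invoke Remark \ref{rk2}: the $C^\infty$ convex functions are dense in $\CC$, and any $C^\infty$ function has $\partial_{j,+}f=\partial_{j,-}f$, so it lies in every $\mathcal U_{m,k}$; hence each $\mathcal U_{m,k}$ contains a dense set and is therefore dense. For \emph{openness}, this is precisely where Lemma \ref{*djdiffd} does the work. Given $f\in\mathcal U_{m,k}$, one cannot apply the lemma to $f$ directly since $f$ need not be $C^1$; the clean route is instead to prove openness by first approximating. Concretely, I would show that every $f\in\mathcal U_{m,k}$ has a $C^1$ convex approximant $\tilde f$ still in $\mathcal U_{m,k}$ (indeed in the stronger set where the gap is $<\tfrac1{2m}$), then apply Lemma \ref{*djdiffd} to $\tilde f$ with $\ep$ chosen so that $2\ep<\tfrac1{2m}$: every $g\in B_{\|\cdot\|}(\tilde f,\varrho_{\tilde f,\ep})$ satisfies $|\partial_{j,\pm}g(\xxx)-\partial_j\tilde f(\xxx)|<\ep$ on $[\ep,1-\ep]^d$, so the two one-sided derivatives of $g$ differ by less than $2\ep<\tfrac1m$, placing $g\in\mathcal U_{m,k}$ once $\ep\leq\tfrac1k$. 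This produces an open ball around $\tilde f$ inside $\mathcal U_{m,k}$, and a ball around the original $f$ by combining with the approximation.

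The main obstacle I anticipate is exactly this mismatch: Lemma \ref{*djdiffd} requires the \emph{center} of the ball to be $C^1$, whereas an arbitrary element of $\mathcal U_{m,k}$ is only convex with a small derivative gap. Getting the logic airtight therefore hinges on the approximation step, and on bookkeeping the constants ($\ep$ versus $\tfrac1m$, and $\ep$ versus $\tfrac1k$) so that the derivative control from the lemma genuinely closes the gap defining $\mathcal U_{m,k}$. The convexity-based monotonicity of one-sided partials, which is what makes $\partial_{j,+}f-\partial_{j,-}f\geq 0$ a meaningful nonnegative gap in the first place, is the structural fact that keeps this clean.
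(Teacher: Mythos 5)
Your overall architecture (a countable intersection of dense open sets, density via the smooth convex functions of Remark \ref{rk2}, derivative control via Lemma \ref{*djdiffd}) is the right one, but the openness step contains a genuine gap. You define $\mathcal U_{m,k}$ by a pointwise bound on the gap $\partial_{j,+}f-\partial_{j,-}f$ and propose to show it is open by taking a $C^1$ approximant $\tilde f$ of a given $f\in\mathcal U_{m,k}$, applying Lemma \ref{*djdiffd} to $\tilde f$, and then ``combining with the approximation'' to get a ball around $f$. That last move does not work: the radius $\varrho_{\tilde f,\ep}$ produced by Lemma \ref{*djdiffd} is of order $\ep\cdot\min_l(x_{j,l}-x_{j,l-1})$, where the partition must be fine enough that $\partial_j\tilde f$ varies by less than $\ep/4$ on each subinterval. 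If $f$ has a genuine kink of size $c$ with $\ep<c<1/m$ (which is allowed in $\mathcal U_{m,k}$), then any convex $C^1$ function $\tilde f$ with $\|f-\tilde f\|=\delta$ must have $\partial_j\tilde f$ climbing by about $c$ over an interval of length $O(\delta/c)$, forcing $\varrho_{\tilde f,\ep}=O(\ep\delta/c)$, which is strictly smaller than $\delta=\|f-\tilde f\|$. So $f\notin B_{\|\cdot\|}(\tilde f,\varrho_{\tilde f,\ep})$, and no neighborhood of $f$ inside $\mathcal U_{m,k}$ is obtained. The set $\mathcal U_{m,k}$ is in fact open, but proving this requires a direct difference-quotient argument for convex functions with a non-smooth center (plus an upper-semicontinuity/compactness argument for the gap function), i.e.\ essentially reproving Lemma \ref{*djdiffd} in a harder setting -- the approximation shortcut cannot substitute for it.

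The paper sidesteps this issue entirely, and you could too with a small change of decomposition: instead of trying to show that the ``good'' set $\mathcal U_{m,k}$ is open, define $\mathcal G_n=\bigcup_m B_{\|\cdot\|}(f_m,\varrho_{n,m})$, where $(f_m)_m$ is a dense sequence of $C^\infty$ convex functions and $\varrho_{n,m}$ is the radius supplied by Lemma \ref{*djdiffd} for $f_m$ with $\ep=1/n$ (combined with a uniform-continuity scale $\delta_{n,m}$ for $\nabla f_m$). Each $\mathcal G_n$ is open and dense by construction, and for $f\in\bigcap_n\mathcal G_n$ one gets, for each $n$, some smooth $f_{m_n}$ whose gradient both approximates the one-sided partials of $f$ to within $1/n$ and oscillates by at most $1/n$ at scale $\delta_{n,m_n}$; letting $n\to\infty$ forces $\partial_{j,+}f=\partial_{j,-}f$ and continuity of $\partial_j f$ on $(0,1)^d$. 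This keeps all your ingredients but places the smooth functions at the centers of the balls, which is exactly where Lemma \ref{*djdiffd} needs them.
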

 
 \begin{proof}
 By convexity, the partial derivatives $ {\partial}_{j,\pm} f( \xxx)$ exist
for any $f\in {\CC}$, $ \xxx\in  {(0,1)^d}$   and $j\in {\{1,...,d \}}.$

Since $\CC$ is separable, one can choose a   sequence of convex  functions $\{f_{m}:m=1,... \}$ dense  in $\CC$. In addition, by Remark \ref{rk2}, one can assume that   all these functions $f_m$ are   $ {{C^ {\infty}}} (\zu^d )  $ functions.

By uniform continuity of all the partial derivatives of $f_m$,  there is a $ {\delta}_{n,m}>0$  such that  for every $j$,  for every $
  \xxx, \xxx'\in  {[0,1]^d} $,
\begin{equation}\label{*R8*a}
| {\partial}_{j}f_{m}( \xxx)- {\partial}_{j}f _{m}( \xxx')|<\frac{1}{n}, \ \ \mbox{when }| \xxx- \xxx'|< {\delta}_{n,m}.
\end{equation}
Applying Lemma \ref{*djdiffd}, it is possible to choose    $0< {\varrho}_{n,m}<\frac{1}{n+m}$
 such that if $f\in B_{\|\cdot\|}(f_{m}, {\varrho}_{n,m})$ then for every $j\in {\{1,...,d \}}$, if  $\xxx=( {{x_ {1} ,...,x_ {d}}}) \in \zu^d$ with   $x_{j}\in [\frac{1}{n},1-\frac{1}{n}]$,  then 
 \begin{equation}\label{*R8*b}
 | {\partial}_{j,\pm} f( \xxx)- {\partial}_{j}f _{m}( \xxx)|< \frac{1}{n}.
\end{equation}

Let us introduce the sets  
$${\cal G}_{n}=\bigcup_{m=1}^{{\infty}}B_{\|\cdot\|}(f_{m} , {\varrho}_{n,m})  \mbox { \ and \ } {\cal G} =\bigcap_{n=1}^{{\infty}}{\cal G}_{n} .$$
 It is clear that ${\cal G} $
is a dense $G_{{\delta}}$ set in $ {\CC}$.

We prove that any $f\in{\cal G} $ is continuously differentiable.  

By definition,    there exists an infinite sequence of integers  $(m_{n})_{n\geq 1}$  such that $f\in B_{\|\cdot\|}(f_{m_{n}}, {\varrho}_{n,m_{n}}).$

Fix $j\in \{1,..,d\}$, and focus on the $j$-th partial derivatives.
Combining inequalities  \eqref{*R8*a} and \eqref{*R8*b},  if $ \xxx, \xxx' \in{[0,1]^d}$, with
$x_{j},x_{j}'\in[\frac{1}{n},1-\frac{1}{n}]$ and $|| \xxx- \xxx'||< {\varrho}_{n,m_{n}}$, then
$$| {\partial}_{j,\pm} f( \xxx)- {\partial}_{j,\pm} f( \xxx')|<| {\partial}_{j}{f_{m_{n}}}( \xxx)- {\partial}_{j}{f_{m_{n}}}( \xxx')|+\frac{2}{n}<\frac{3}{n}.$$
The $\pm$ is the above inequality means that any choice of left or right derivative can be made. 

From this, letting $n\to {\infty}$, it follows easily that $ {\partial}_{j,+} f( \xxx)= {\partial}_{j,-} f( \xxx)$, hence $ {\partial}_{j} f$ is continuous on $ {(0,1)^d}$.
 \end{proof}

 However,  the next lemma shows that  typical functions $f$ in $ {\CC}$ are not differentiable on $ {[0,1]^d}$. The problem comes from the boundary of the domain.
 
 \begin{proposition}\label{*ddoo}
 There is a dense $G_{{\delta}}$ set ${\cal G}_{{\infty}}$ in $ {{\cal G}}$
  such that   every $f\in{\cal G}_{{\infty}}$ satisfies the following: for every $j\in  {\{1,...,d \}}$, for every 
  $x_{i}\in[0,1]$ with $i\in {\{1,...,d \} \setminus \{j \}}$, one has
  \begin{equation}
  \label{*R10a*a}
   {\partial}_{j,+}f(x_{1},...,x_{j-1}, 0 ,x_{j+1},...,x_{d}) =- {\infty}
  \end{equation}
 and
  \begin{equation}\label{*R10a*b}
   {\partial}_{j,-}f(x_{1},...,x_{j-1}, 1 ,x_{j+1},...,x_{d}) =+ {\infty}.
  \end{equation}
Moreover,
 \begin{equation}\label{*R10a*c}
 h_{f}(x_{1},...,x_{j-1}, 0,x_{j+1},...,x_{d})=0=
 h_{f}(x_{1},...,x_{j-1}, 1 ,x_{j+1},...,x_{d}).
 \end{equation}
 \end{proposition}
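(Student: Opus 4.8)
The plan is to realize $\mathcal{G}_\infty$ as the intersection of the set $\mathcal{G}$ of Proposition \ref{*gendiff} with countably many open dense subsets of $\CC$, each of which forces $f$ to tend to its boundary value \emph{slower than every power} along the inward normal to a given face. All three conclusions \eqref{*R10a*a}, \eqref{*R10a*b} and \eqref{*R10a*c} will then be read off from this single slow--decay property, so that keeping $\mathcal{G}_\infty\subseteq\mathcal{G}$ also preserves the interior differentiability.

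Fix $j\in\{1,\dots,d\}$ and the faces $F_j^0=\{\xxx:x_j=0\}$, $F_j^1=\{\xxx:x_j=1\}$. For integers $k,p\ge1$ I would set, writing $\xxx'$ for a point of the face,
$$ W_{k,p,j}^{0}=\Big\{f\in\CC:\ \exists\,t\in(0,1/p)\ \text{ with }\ \inf_{\xxx'\in F_j^0}\big|f(\xxx'+{\mathbf e}_j t)-f(\xxx')\big|>t^{1/k}\Big\}, $$
and symmetrically $W_{k,p,j}^{1}$ using $\xxx'-{\mathbf e}_j t$ with $\xxx'\in F_j^1$. Each such set is open: for fixed $t$ the map $f\mapsto\inf_{\xxx'\in F_j^0}|f(\xxx'+{\mathbf e}_j t)-f(\xxx')|$ is $\|\cdot\|$--continuous, being the infimum over the compact face of a jointly continuous expression, so $W_{k,p,j}^{0}$ is a union over $t\in(0,1/p)$ of open sets. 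I then define $\mathcal{G}_\infty=\mathcal{G}\cap\bigcap_{k,p,j}\big(W_{k,p,j}^{0}\cap W_{k,p,j}^{1}\big)$; since $\mathcal{G}$ is a dense $G_\delta$ and (granting the density step below) each factor is dense and open, Baire's theorem gives that $\mathcal{G}_\infty$ is a dense $G_\delta$ in $\CC$, hence also in $\mathcal{G}$.

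Next I would verify that $f\in\mathcal{G}_\infty$ gives the statement. Fixing $k$ and letting $p\to\infty$ produces $t_m\downarrow0$ with $|f(\xxx'+{\mathbf e}_j t_m)-f(\xxx')|>t_m^{1/k}$ simultaneously for all $\xxx'\in F_j^0$. Hence, for a fixed $\xxx'\in F_j^0$ and any $h>1/k$, one has $|f(\xxx'+{\mathbf e}_j t_m)-f(\xxx')|\,t_m^{-h}\ge t_m^{1/k-h}\to\infty$, so $f\notin C^{h}(\xxx')$; as $k$ is arbitrary, the one--variable restriction $t\mapsto f(\xxx'+{\mathbf e}_j t)$ has exponent $0$ at $t=0$, and since restricting to a line cannot lower the exponent we get $h_f(\xxx')=0$, i.e. \eqref{*R10a*c}. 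For the derivatives, note that the quotient $q(t)=(f(\xxx'+{\mathbf e}_j t)-f(\xxx'))/t$ is nondecreasing in $t$ by convexity and bounded above by $q(1)\le2\|f\|$; since $|q(t_m)|>t_m^{1/k-1}\to\infty$ while $q$ is bounded above, necessarily $q(t_m)\to-\infty$, so $\partial_{j,+}f(\xxx')=\lim_{t\to0^+}q(t)=-\infty$, which is \eqref{*R10a*a}. The face $F_j^1$ is treated identically and yields \eqref{*R10a*b}.

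The remaining, and main, difficulty is the density of each $W_{k,p,j}^{0}$ in $\CC$. Given $f\in\CC$ and $\varepsilon>0$, fix once and for all a \emph{concave}, increasing, bounded modulus $\omega:[0,1]\to[0,\infty)$ with $\omega(0)=0$ and $\omega(t)/t^{h}\to\infty$ as $t\to0^+$ for every $h>0$ (for instance $\omega(t)=1/(1-\log t)$ for $t\le e^{-1}$, extended linearly and hence concavely to $[e^{-1},1]$), and perturb by $g=f-\delta\,\omega(x_j)$ with $\delta\le\varepsilon/\omega(1)$. Then $g\in\CC$ as a sum of convex functions, and $\|g-f\|<\varepsilon$. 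Because $\omega$ depends only on $x_j$, the perturbation shifts the normal difference by the \emph{same} amount $-\delta\omega(t)$ at every $\xxx'\in F_j^0$; because $f$ is convex, its own normal difference quotient is bounded above by its unit--scale value, $f(\xxx'+{\mathbf e}_j t)-f(\xxx')\le2\|f\|\,t$ uniformly in $\xxx'$ and $t\in(0,1]$. Combining, $g(\xxx'+{\mathbf e}_j t)-g(\xxx')\le2\|f\|\,t-\delta\omega(t)\le-\tfrac{\delta}{2}\omega(t)$ for all small $t$, so $|g(\xxx'+{\mathbf e}_j t)-g(\xxx')|\ge\tfrac{\delta}{2}\omega(t)>t^{1/k}$ once $t<1/p$ is small enough, uniformly over the face; thus $g\in W_{k,p,j}^{0}$. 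The perturbation $-\delta\,\omega(1-x_j)$ settles $F_j^1$. The crux is exactly this estimate: uniformity over the compact face demands a perturbation depending only on $x_j$; the single modulus $\omega$ annihilates all positive exponents simultaneously; and the convexity bound $f(\xxx'+{\mathbf e}_j t)-f(\xxx')\le2\|f\|\,t$ is what stops the difference quotient of $f$ from blowing up like $1/t$ and absorbing the perturbation.
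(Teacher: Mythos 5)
Your proof is correct, and it rests on the same underlying mechanism as the paper's: a Baire category argument driven by a convex perturbation depending on the single coordinate $x_j$ alone, whose drop near the face $\{x_j=0\}$ beats every power $t^{1/k}$ uniformly over the face. Where you genuinely differ is in how the residual set is assembled. The paper follows the ``dense sequence plus shrinking balls'' template: it fixes a dense family of $C^\infty$ convex functions $f_m$ with $|\partial_1 f_m|\le M_m$, adds the steep piecewise-linear ramps $\varphi_l$ (slope $-l^{l-1}$ on $[0,l^{-l}]$), and takes ${\cal G}^{0,1}_n=\bigcup_m B_{\|\cdot\|}\big(f_m+\varphi_{nmM_m},(nmM_m)^{-nmM_m}\big)$; the price is that the drop of size $l^{-1}$ over the interval $[0,l^{-l}]$ must be propagated from the perturbed centre to every function in the tiny ball, which is exactly where the bound $M_m$ and the careful choice of radius enter. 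You instead encode the desired quantitative property directly as the sets $W^0_{k,p,j}$ (the uniformity over the compact face is what makes the fixed-$t$ condition $\|\cdot\|$-open), and then only need to exhibit a single nearby convex function in each, namely $f-\delta\,\omega(x_j)$ with a concave logarithmic modulus; the convexity bound $f(\xxx'+{\mathbf e}_jt)-f(\xxx')\le 2\|f\|t$ does the rest. This dispenses with the derivative bounds $M_m$ and the radius bookkeeping entirely, and delivers both conclusions (infinite one-sided derivative and zero exponent) on all faces from one scale-by-scale estimate. Two small points you may wish to make explicit: that the concave extension of $\omega$ beyond $e^{-1}$ keeps $-\delta\,\omega(x_j)$ convex on all of $[0,1]^d$, and that passing from ``the restriction to the ${\mathbf e}_j$-segment has exponent $0$'' to $h_f(\xxx')=0$ also requires ruling out $h\ge 1$ --- which follows since a $C^h$ estimate with $h>1$ along the segment would force $\partial_{j,+}f(\xxx')$ to be finite, contradicting \eqref{*R10a*a}; the paper is equally terse on this last step.
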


 \begin{proof}
 We are going to show  that for a fixed $j$, there is a dense $G_{{\delta}}$ set
 ${\cal G}^{0,j}$ in $ {{\CC}}$  such that if $f\in{\cal G}^{0,j}$
 then \eqref{*R10a*a} and the first equality in 
 \eqref{*R10a*c} holds.
  
 \medskip
  
Without loss of generality, one considers  $j=1$. As in the proof of Theorem \ref{*gendiff}, one chooses a sequence of $C^\infty$
 functions  $(f_{m})_{m\geq 1}$. We also select integer constants $M_{m}\geq 1$
such that $| {\partial}_{1}f_{m}|\leq M_{m}$ on $ {[0,1]^d}$.

For every integer $l\geq 1$, let us introduce the  mapping $ {\varphi}_{l}$ defined as
\begin{eqnarray*}
 {\varphi}_{l}( {{x_ {1} ,...,x_ {d}}})= \begin{cases} -l^{l-1}x_{1} &  \mbox { if }0\leq x_{1}\leq l^{-l},\\
 -l^{-1} &  \mbox { if } l^{-l}\leq x_{1}\leq 1.
\end{cases}
\end{eqnarray*}

Clearly, $ {\varphi}_{l}$ is convex and for any fixed $n$ the functions
$f_{m}+ {\varphi}_{n\cdot m\cdot M_{m}}$, $m=1,...$ are dense in $ {\CC}$.
Set $${\cal G}_{n}^{0,1}=\bigcup_{m=1}^{{\infty}}B_{\|\cdot\|}(f_{m}+ {\varphi}_{n\cdot m\cdot M_{m}},
(n\cdot m\cdot M_{m})^{-n\cdot m\cdot M_{m}}),$$ and 
$${\cal G}^{0,1}=\bigcap_{n=1}^{{\infty}} {\cal G}_{n}^{0,1}.$$

We prove that if $f\in{\cal G}^{0,1}$, then \eqref{*R10a*c} and hence \eqref{*R10a*a} hold.  By definition, there exists an infinite sequence of integers  $(m_{n})_{n\geq 1}$  such that 
$$f\in B_{\|\cdot\|} \big(f_{m_{n}}+ {\varphi}_{n\cdot m_n\cdot M_{m_n}},(n\cdot m_n\cdot M_{m_n})^{-n\cdot m_n\cdot M_{m_n}}\big).$$
For ease of notation, set $l_{n}=n\cdot m_n\cdot M_{m_n}$, that is
$$f\in B_{\|\cdot\|}(f_{m_{n}}+ {\varphi}_{l_{n}},l_{n}^{-l_{n}}).$$

Consider $\xxx\in \zu^d$, with $x_1=0$.  Then for every $n\geq 5$, 
\begin{eqnarray*}
\label{*R10c*a}
&&f(0,x_{2},...,x_{d})-f(l_{n}^{-l_{n}},x_{2},...,x_{d}) \\
& \geq  &  (f_{m_{n}} + {\varphi}_{l_{n}})(0,x_{2},...,x_{d}))-(f_{m_{n}} + {\varphi}_{l_{n}})(l_{n}^{-l_{n}},x_{2},...,x_{d}))  -2\cdot l_{n}^{-l_{n}}\\
  & \geq & -M_{m_{n}} l_{n}^{-l_{n}} + l_n^{-1} -2\cdot l_{n}^{-l_{n}}\\
 & = & 
 l_{n}^{-1}\Big (1-\frac{M_{m_{n}}}{l_{n}}l_{n}^{-l_{n}+2}-\frac{2}{l_{n}}l_{n}^{-l_{n}+1}\Big ) \\
& > &\frac{1}{2l_{n}},
\end{eqnarray*}
where we have used the boundedness of $\partial f_{m_{n}}$ by $M_{m_n}$ and the fact that $l_n>\!\!> M_{m_n} $.
Hence, for any $ {\alpha}>0$ we have
$$\frac{f(0,x_{2},...,x_{d})-f(l_{n}^{-l_{n}},x_{2},...,x_{d})}{l_{n}^{-l_{n} {\alpha}}}>\frac{1}{2}l_{n}^{l_{n} {\alpha}-1} ,$$
which tends to infinity when $n$ goes to infinity. Hence \eqref{*R10a*a} holds true, and one also deduces that $h_f(0,x_{2},...,x_{d}) = 0$.

\medskip

Similar arguments yield $G_\delta$ sets ${\cal G}^{0,j}$ for $j>1$ and ${\cal G}^{1,j}$ for $j=1,...,d$ such that the functions $f$ in these sets satisfy the corresponding equalities in (\ref{*R10a*a}-\ref{*R10a*c}).
 
 Finally, the set 
 $$ {{\cal G}}_\infty =  \bigcap_{j= 1}^d  {\cal G}^{0,j}\cap{\cal G}^{1,j}$$
 satisfies the conditions of Proposition \ref{*ddoo}.
  \end{proof}

We conclude this section by proving that typical convex functions have only pointwise exponents less than 2.
\begin{proposition}\label{*lemupperh*}
There exists a $G_\delta$-set $\mathcal{G}^2$ such that for every 
$f\in \mathcal{G}^2$,  $E_f(h) = \emptyset$ for every $h>2$.
\end{proposition}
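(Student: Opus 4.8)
The plan is to reduce the statement to a one–dimensional, \emph{super}-quadratic lower bound on second differences, and then to force this bound generically by a Baire–category construction built on strongly convex smooth perturbations. Throughout, fix the direction $\mathbf{e}_1$ and, for $\xxx\in(0,1)^d$ and $r>0$ small, write $\Delta_r g(\xxx)=g(\xxx+r\mathbf{e}_1)+g(\xxx-r\mathbf{e}_1)-2g(\xxx)$. My first step is the following sufficient condition: if for every $\eta>0$ there are arbitrarily small $r>0$ with $\Delta_r g(\xxx)\ge r^{2+\eta}$, then $h_g(\xxx)\le 2$. Indeed, the restriction $\gamma(t)=g(\xxx+t\mathbf{e}_1)$ satisfies $h_g(\xxx)\le h_\gamma(0)$, because restricting an estimate of the form \eqref{defpoint} to a line can only raise the pointwise exponent; and if $\gamma\in C^{2+\eta'}(0)$ then, the affine part of the Taylor polynomial cancelling in $\gamma(r)+\gamma(-r)-2\gamma(0)$, one gets $\Delta_r g(\xxx)=O(r^{2+\eta'})$, which for $\eta<\eta'$ contradicts $\Delta_r g(\xxx)\ge r^{2+\eta}$ along a sequence $r\to 0$. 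Thus the whole task becomes: generically produce such super-quadratic bumps at every interior point, at arbitrarily small scales.

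Next I would set up the construction. Let $q(\xxx)=\tfrac12|\xxx|^2$, so that $\Delta_r q(\xxx)=r^2$ exactly, and let $(f_m)_m$ be a dense sequence of $C^\infty$ convex functions (Remark \ref{rk2}). Since each $f_m$ is convex, $\Delta_r f_m(\xxx)\ge 0$, whence $\Delta_r\big(f_m+\tfrac1k q\big)(\xxx)\ge \tfrac1k r^2$ for all $\xxx,r$. Now I couple the curvature parameter to the scale by choosing $r_k=k^{-(n+1)}$; a direct comparison of exponents gives a definite gap $G_{n,k}:=\tfrac1k r_k^{2}-r_k^{2+1/n}>0$. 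Setting $\rho_{n,N,m,k}=\tfrac18 G_{n,k}$, any $g$ with $\|g-(f_m+\tfrac1k q)\|<\rho_{n,N,m,k}$ satisfies $\Delta_{r_k}g(\xxx)>r_k^{2+1/n}$ uniformly for $\xxx\in K_N:=[\tfrac1N,1-\tfrac1N]^d$ (subtracting the $4\rho$ perturbation error, and noting that $r_k\le 1/N$ keeps $\xxx\pm r_k\mathbf{e}_1\in[0,1]^d$). I then define
$$\mathcal{O}_{n,N,p}=\bigcup_{m\ge1}\ \bigcup_{k:\,r_k\le\min(1/N,1/p)} B_{\|\cdot\|}\big(f_m+\tfrac1k q,\ \rho_{n,N,m,k}\big).$$
These are open, and dense: any target is approximated by $f_m+\tfrac1k q$ with $m,k$ large, so that $\tfrac1k\|q\|$ and $r_k$ are both small.

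Finally I take $\mathcal{G}^2={\cal G}_\infty\cap\bigcap_{n,N,p\ge1}\mathcal{O}_{n,N,p}$, a dense $G_\delta$ (using Proposition \ref{*ddoo} for ${\cal G}_\infty$). For $f\in\mathcal{G}^2$ and $\xxx\in(0,1)^d$, choose $N$ with $\xxx\in K_N$; for each fixed $n$, membership in $\mathcal{O}_{n,N,p}$ for all $p$ yields scales $r\le\min(1/N,1/p)\to 0$ with $\Delta_r f(\xxx)>r^{2+1/n}$, which is precisely the hypothesis of the first-step criterion, so $h_f(\xxx)\le 2$. On $\partial([0,1]^d)$, $f\in{\cal G}_\infty$ forces $h_f=0$ by \eqref{*R10a*c}. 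Hence $E_f(h)=\emptyset$ for every $h>2$, as claimed.

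The delicate point, and the reason the threshold must be the super-quadratic $r^{2+1/n}$ with scales pushed to $0$, is that a \emph{uniform} quadratic bound $\Delta_r g\ge c r^2$ (i.e. strong convexity) is \emph{not} generic: it is destroyed by an arbitrarily small convex perturbation creating a flat spot, so the $c$-strongly convex functions are meager and a perturbation of sup-norm $\eta$ only buys curvature of order $\eta$. A super-quadratic bump at a small scale, by contrast, costs only a negligible amount of total curvature (a mass/Vitali count shows the required total curvature over the cube is $\sim r^{\eta}\to 0$), so it survives arbitrarily small perturbations once the curvature parameter $1/k$ is tied to the scale $r_k$. Getting the small-scale quantifier right — the extra index $p$, which is what recovers a whole sequence $r\to 0$ for each fixed $n$ rather than a single scale — is the main thing I expect to have to check carefully.
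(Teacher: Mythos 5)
There is a fatal gap at the very first step of your argument, and it propagates through the whole construction. Your criterion --- ``if for every $\eta>0$ there are arbitrarily small $r$ with $\Delta_r g(\xxx)\ge r^{2+\eta}$, then $h_g(\xxx)\le 2$'' --- is false. The error is the claim that for $\gamma\in C^{2+\eta'}(0)$ only the \emph{affine} part of the Taylor polynomial survives in $\gamma(r)+\gamma(-r)-2\gamma(0)$: for exponents in $(2,3)$ the approximating polynomial in \eqref{defpoint} contains a quadratic term (this is exactly the term $D_2h^2$ that the paper carries along in \eqref{*2**c}), and that term contributes $ar^2$ to the second difference. The correct conclusion from $\gamma\in C^{2+\eta'}(0)$ is therefore $\Delta_r g(\xxx)=ar^2+O(r^{2+\eta'})$ with $a\ge0$, which is perfectly compatible with $\Delta_r g(\xxx)\ge r^{2+\eta}$ for \emph{all} small $r$; note that $r^{2+\eta}\le r^2$ for small $r$, so your ``super-quadratic'' threshold is actually \emph{below} the quadratic scale, not above it. Concretely, $g(\xxx)=\tfrac12|\xxx|^2$ satisfies $\Delta_r g(\xxx)=r^2\ge r^{2+\eta}$ at every point and every small $r$, yet $h_g\equiv+\infty$. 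Consequently the generic property enforced by your open sets $\mathcal O_{n,N,p}$ is one that every smooth strictly convex function already possesses, so it cannot imply $E_f(h)=\emptyset$ for $h>2$. (The Baire machinery itself is internally consistent --- openness, density, the $4\rho$ error estimate and the choice $r_k=k^{-(n+1)}$ all check out --- but the property it makes generic is vacuous for the stated purpose.)

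To rule out $h_f(\xxx)>2$ you must contradict the two-sided estimate $|f(\xxx+h\mathbf e_1)-f(\xxx)-\partial_1f(\xxx)h-D_2h^2|\le C|h|^{2+\ep}$ for \emph{every} admissible coefficient $D_2$, which requires the second-order behaviour of $f$ along $\mathbf e_1$ to oscillate between two incompatible quadratic profiles at comparable scales. This is exactly what the paper's proof extracts from the staircase perturbations $\overline f_{l}$: at scale $|h_n|\sim2^{-l_n^2}$ the derivative of the perturbation is locally constant, which pins $D_2$ down to $\tfrac12\partial_1^2f_{m_n}(\xxx)$ up to an error $|h_n|^{\ep}$ (inequality \eqref{*3**a}); at the nearby scale $8|h_n|$ that derivative jumps by $2^{-l_n^2-l_n}$, producing an excess of order $|h_n|\cdot2^{-l_n^2-l_n}\gg|h_n|^{2+\ep}$ that no single value of $D_2$ can absorb. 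A one-sided lower bound $\Delta_r f\ge r^{2+\eta}$ at a single scale per index, which is all your scheme delivers, cannot substitute for this two-scale comparison; any repair of your approach would have to build in a mechanism for excluding all quadratic approximants simultaneously.
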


Before proving Proposition \ref{*lemupperh*}, we introduce some perturbation functions already used in Section 4 of \cite{BSJMAA}.

\begin{definition}
\label{defgamma}
For every 
$l\in  {\ensuremath {\mathbb N}}$, the function  
 $\ggg_{l}: {[0,1]} \to  {[0,1]}$ is defined   as follows:  
\begin{itemize}
\item $\gamma_l$ is continuous,
\item
For every integer $j=0,...,2^{l^{2}}-1$,  if $x_{1}\in [j2^{-l^{2}},(j+1)2^{-l^{2}}-2^{-l^{4}}]$,  one sets $\ggg_{l}(x_{1})=j2^{-l^{2}-l}$. So $\gamma_l$ is constant on these intervals,

  \item
 $\ggg_{l}(1)=2^{-l}$,
 
 \item
For every integer $j=0,...,2^{l^{2}}-1$,  the mapping   $\ggg_{l}$  is affine on the intervals  $[
 (j+1)2^{-l^{2}}-2^{-l^{4}},(j+1)2^{-l^{2}}]$,

\end{itemize}
\end{definition}
These functions $\ggg_l$ are continuous, ranging from $0$ to $2^{-l}$, and  are strictly increasing only on the $2^{l^2}$ many very small, uniformly distributed, intervals of length $2^{-l^4}$.

\begin{proof}
 We start by selecting a   set of $ {{C^ {\infty}}}$ functions $\{ \displaystyle  f_{m}:m=1,2,...\}$ which is dense in $ {\CC}$.
We choose an integer $M_{m,3}\geq 1$ such that the second and  third partial derivatives $\dd_{1}^{3}f_{m}$ with respect to the first variable $x_1$ of $f_m$ satisfy  $|\dd_{1}^{2}f_{m}|+|\dd_{1}^{3}f_{m}|\leq M_{m,3}$.

Observe that the $x_{1}$-partial derivatives $ {\partial}_{1} f_{m}$ of these functions  are monotone in the first variable.

Then,    one introduces the auxiliary functions, depending only on the first variable: for $l\geq 0$, 
\begin{equation}\label{*13**a}
{\overline \ggg}_{l}(x_{1}, x_{2},...,x_{d})=\ggg_{l}(x_{1}).
\end{equation}

The perturbation functions are defined for $l\geq 0$ by
\begin{equation}\label{*14**a}
{\overline f}_{l}( \xxx)={\overline f}_{l}(x_{1}, x_{2},...,x_{d})=\int_{0}^{x_{1}}{\overline \ggg}_{l}(t,x_{2},...,x_{d})dt=\int_{0}^{x_{1}}\ggg_{l}(t)dt.
\end{equation}

Next we  apply Lemma \ref{*djdiffd}  to the functions $f_{m}+ {\overline f}_{ m+M_{m,3}+n}$ with $ {\varepsilon}= {\varepsilon}_{m,n}:= 2^{-(  m+M_{m,3}+n) ^8}$ and $j=1$.
There exists a constant, denoted by $\varrho_{m,n}>0$,  such that if
$f\in B_{\|\cdot\|}(f_{m}+{\overline f}_{ m+M_{m,3}+n},\varrho_{m,n})$, then
for any $x_{1}\in[{\varepsilon}_{m,n},1- {\varepsilon}_{m,n} ]$ and $x_{j}\in [0,1]$ for $j=2,...,d$, one has
\begin{equation}\label{*2**hiv}
| {\partial}_{1,\pm}f({\mathbf x})- {\partial}_{1}(f_{m}+{\overline f}_{ m+M_{m,3}+n})({\mathbf x})|<  {\varepsilon}_{m,n} .
\end{equation}
Without limiting generality one  can assume that $\varrho_{m,n}\leq {\varepsilon}_{m,n}$.

Set
$$ {{\cal R}}_{n}=\bigcup_{m=1}^{{\infty}} B_{\|\cdot\|}(f_{m}+{\overline f}_{ m+M_{m,3}+n},\varrho_{m,n}) .$$
It is not difficult to see that $ {{\cal R}}_{n}$ is open and dense in $ {\CC}$. 
Suppose that $\cag_{\oo}$ is the dense $G_{\ddd}$
set from Proposition  \ref{*ddoo}. Since  $\cag_{\oo}$ is a subset of $\cag$ from
Proposition \ref{*gendiff}  all $f\in \cag_{\oo} $ are continuously differentiable on
$(0,1)^{d}$. Moreover, the H\"older exponent is zero of these functions
on the boundary of $[0,1]^{d}$. 

 Finally, set 
$$ {{\cal G}}^2= {{\cal G}_\oo}\bigcap  \left(\bigcap_{n=1}^{{\infty}}  {{\cal R}}_{n}\right).$$

By construction, there exists a sequence of integers $(m_{n})_{n\geq 1}$  such that $f\in B_{\|\cdot\|}(f_{m_{n}}+{\overline f}_{ m_n+M_{m_n,3}+n},\varrho_{n,m_{n}})$ for every $n$. 

 For simplification, we set   $l_{n} =m_{n}+M_{m_n,3}+n$,  $\rho_n:= \rho_{m_n,n}$ and $ \ep_n:=  {\varepsilon}_{m_n,n} $, so that for every $n\geq 1$, $\rho_n \leq \ep_n = 2^{-(l_n)^8}$, $f\in B_{\|\cdot\|}(f_{m_{n}}+ { {\overline{f}}}_{l_n},\varrho_{n})$, and for any $x_{1}\in[{\varepsilon}_{n},1- {\varepsilon}_{n} ]$ and $x_{j}\in [0,1]$ for $j=2,...,d$
 \begin{equation}\label{*2**hiv2}
| {\partial}_{1,\pm}f({\mathbf x})- {\partial}_{1}(f_{m_n}+ {{\overline{f}}}_{l_n} )({\mathbf x})|<  {\varepsilon}_{n} .
\end{equation}

Proceeding towards a contradiction, suppose that there exists ${\mathbf x}=(x_{1},x_{2}...,x_{d})\in [0,1]^{d}$ where $h_{f}({\mathbf x})>2$. Since the H\"older exponent of $f$ is zero on the boundary of $[0,1]^{d}$, necessarily  ${\mathbf x}\in (0,1)^{d}$.

Since $h_{f}({\xxx})>2$, one can find $\ep>0$, $D_{2}, C_{{\xxx}}\in \R$  such that for every small $h$, 
\begin{equation}\label{*2**c}
|f({\mathbf x}+h{\mathbf e}_1)-f({\mathbf x})-\partial_1 f({\mathbf x})h-D_{2}h^{2}| \leq  C_{{\mathbf x}} |h|^{2+\ep}.
\end{equation}
Without limiting generality we can suppose that $\ep<1/2$ holds as well.

Consider  the unique integer $j_{n}\in \N$ such that $x_1\in [j_{n}  2^{-l_n^{2}},(j_{n}+1)2^{-l_{n}^{2}})$.   Next we consider two cases depending on whether
$x_1\in [j_{n}  2^{-l_n^{2}},(j_{n}+1)2^{-l_{n}^{2}}-2^{-l_n^{4}}]$, or
$x_1\in [(j_{n}+1)2^{-l_{n}^{2}}-2^{-l_n^{4}},(j_{n}+1)2^{-l_{n}^{2}}]$.

\medskip

{\bf Case 1.} Assume that $x_1 \in [j_n2^{-l_n^{2}},(j_n+1)2^{-l_n^{2}}-2^{-l_n^{4}}]
$ for infinitely many integers $n\geq 1$.

We set  ${\mathbf h}_n=h_{n}{\mathbf e}_1$ with $|{\mathbf h}_n|=|h_{n}|=2^{-l_n^{2}}/4$, such that the first coordinates of  $\xxx$ and $\xxx+{\mathbf h}_n$ both belong to $ [j_n2^{-l_n^{2}},(j_n+1)2^{-l_n^{2}}-2^{-l_n^{4}}]$.

Combining  \eqref{*2**hiv2}, \eqref{*2**c}  and the fact    that  $f\in B_{\|\cdot\|}(f_{m_{n}}+{ {\overline f}}_{l_n},\varrho_{n})$,  we obtain 
\begin{eqnarray}
\nonumber
&&\Big | \Big (f_{m_n}({\mathbf x}+\mathbf{h}_{n}) +{ {\overline f}}_{l_n}({\mathbf x}+\mathbf{h}_{n}) \Big) - \Big( 
f_{m_n}({\mathbf x})+{ {\overline f}}_{l_n} ({\mathbf x})\Big)\\
\nonumber
&& - h_{n}(\partial_1f_{m_n} +\partial_1 { {\overline f}}_{l_n})({\mathbf x}) -D_{2}h_{n}^{2} \
\Big |
 \\
\nonumber
&&\leq  \,C_{{\mathbf x}}|h_{n}|^{2+\ep}+2\rrr_{n }+\ep_{n }|h_{n}|\\
\nonumber
&&\leq  \,C_{{\mathbf x}}|h_{n}|^{2+\ep}+2 \cdot2^{-(l_n)^8} + 2^{-(l_n)^8} |h_{n}|\\
&& \leq \, (C_{{\mathbf x}}+1)|h_{n}|^{2+ \ep},
\label{*2**a}\end{eqnarray}
where the last inequality holds  since $\rho_n\leq \ep_n \leq 2^{-(l_n)^8} <\!\!< |h_n|^3$ for large $n$.

By using the Taylor polynomial estimate of the $C^{\oo}$ function $f_{m_n}$, one deduces that 
\begin{equation}\label{*2**b}
\Big|f_{m_n}({\mathbf x}+\mathbf{h}_{n})-f_{m_n}({\mathbf x})-\partial_1f_{m_n}({\mathbf x})h_{n}-\frac{\partial_1^{2}f_{m_n}({\mathbf x})}{2!}h_{n}^{2} \Big|\leq \frac{M_{m_n,3}}{3!}|h_{n}|^{3}.
\end{equation}

\newcommand\yyy{\mathbf {y}}

Since by its definition $\partial_1 { {\overline f}}_{l_n} (\yyy)=   \ggg_{l_n}(x_{1}) $ (i.e. it is constant)
for any $\yyy$ on the line segment connecting ${\mathbf x}$ and $ {\mathbf x} +\mathbf{h}_{n}$, we also have 
\begin{equation}\label{*2**d}
 \overline{f}_{l_n}({\mathbf x}+\mathbf{h}_{n})-\overline{f}_{l_n}({\mathbf x})-\partial_1\overline{f}_{l_n}({\mathbf x})h_{n} = 0.
\end{equation} 
Using \eqref{*2**a}, \eqref{*2**b} and \eqref{*2**d} we infer
\begin{eqnarray}\label{*3**a}
&&\Big |\frac{\partial_1^{2}f_{m_n}({\mathbf x})}{2!}h_{n}^{2}-D_{2}h_{n}^{2}\Big |\\
\nonumber && \leq (C_{{\mathbf x}}+1)|h_{n}|^{2+\ep}+\frac{M_{m_n,3}}{3!}|h_{n}|^{3}  <
(C_{{\mathbf x}}+2)|h_{n}|^{2+\ep}
\end{eqnarray}
where, using the fact that $\ep<1/2$,
 the last inequality holds if $n$ is sufficiently large since $M_{n,3}\leq l_n \leq 2^{l_n} \leq |h_n|^{-1/2}$ for $n$ large.

 Now take $\overline{{\mathbf h}}_n=8|h_{n}|{\mathbf e}_1.$
 Then \eqref{*2**a} and \eqref{*2**b} used with $\overline{{\mathbf h}}_n$ instead of ${\mathbf h}_n $ 
 for sufficiently large $n$   yield
 \begin{eqnarray}
 \nonumber
 && \Big |\overline{f}_{l_n}({\mathbf x}+\overline{{\mathbf h}}_n)-\overline{f}_{l_n}({\mathbf x})
 -\partial_1\overline{f}_{l_n} ({\mathbf x})|\overline{{\mathbf h}}_n|+ \frac{\partial_1^{2}f_{m_n}({\mathbf x})}{2!}|\overline{{\mathbf h}}_n|^{2}-D_{2}|\overline{{\mathbf h}}_n|^{2}
 \Big| \\
\nonumber
 &&\leq  \frac{M_{m_n,3}}{3!}|\overline{{\mathbf h}}_n|^{3}+(C_{{\mathbf x}}+1 )|\overline{{\mathbf h}}_n|^{2+\ep}\\
\nonumber
 &&
=  |\overline{{\mathbf h}}_n|^{2+\ep}(\frac{M_{m_n,3}}{3!}|\overline{{\mathbf h}}_n|^{1-\ep}+C_{{\mathbf x}}+1 )\\
 \label{*3**b}
\ \  \  &&\leq   |\overline{{\mathbf h}}_n|^{2+\ep}(C_{{\mathbf x}}+2 ).
 \end{eqnarray}
  Now for large $n$,   it follows  from \eqref{*3**a} and $|h_{n}|<|\overline{{\mathbf h}}_n|$ that
\begin{eqnarray}
\label{*4**a}
&& \Big |
 \overline{f}_{l_n} ({\mathbf x}+\overline{{\mathbf h}}_n)-\overline{f}_{l_n}({\mathbf x})-\partial_1\overline{f}_{l_n}({\mathbf x})|\overline{{\mathbf h}}_n|
 \Big| \leq 
 |\overline{{\mathbf h}}_n|^{2+\ep}(2C_{{\mathbf x}}+4 ).
 \end{eqnarray}
 Next we obtain a contradiction by using a lower estimate of the left-hand side
 of \eqref{*4**a}.
 By convexity of $\overline{f}_{l_n} $ we have $\partial_1 \overline{f}_{l_n}  (\yyy)\geq
 \partial_1 \overline{f}_{l_n} ({\mathbf x}) $ when $\yyy$ is on the line segment connecting
 ${\mathbf x}$ and ${\mathbf x}+\overline{{\mathbf h}}_n$. Even more, this interval contains a subinterval of length larger than
 $|\overline{{\mathbf h}}_n|/8=|h_{n}|$ where
 $$\partial_1 \overline{f}_{l_n} (\yyy)={\overline \ggg}_{l_n}(\yyy) = \partial_1\overline{f}_{l_n} ({\mathbf x})+ 2^{-l_n^{2}-l_n}=\ggg_{l_n}(x_{1})+2^{-l_n^{2}-l_n}.$$
 Thus, 
 $$\overline{f}_{l_n} ({\mathbf x}+\overline{{\mathbf h}}_n)-\overline{f}_{l_n} ({\mathbf x})\geq \partial_1\overline{f}_{l_n} ({\mathbf x})|\overline{{\mathbf h}}_n|+\frac{|\overline{{\mathbf h}}_n|}{8}\cdot 2^{-l_n^{2}-l_n}.$$ 
 
 By \eqref{*4**a}, one should have 
 $$\frac{|\overline{{\mathbf h}}_n|}{8}\cdot 2^{-l_n^{2}-l_n} \leq |\overline{{\mathbf h}}_n|^{2+\ep}(2C_{{\mathbf x}}+4 ).$$
 
 Since $8|h_{n}|=|\overline{{\mathbf h}}_n|=2\cdot 2^{-l_n^{2}}$ we would obtain that
 $$2^{-l_n^{2}-l_n}\leq (2\cdot 2^{-l_n^{2}})^{1+\ep}(2C_{{\mathbf x}}+4 ),$$
 a contradiction when $n$ is large.

\medskip

{\bf Case 2.} Suppose that  $x_1 \in [(j_{n}+1)2^{-l_{n}^{2}}-2^{-l_n^{4}},(j_{n}+1)2^{-l_{n}^{2}}]$ for  infinitely many   $n\geq 1$.

We set  ${\mathbf h}_n=h_{n}{\mathbf e}_1$ with $|{\mathbf h}_n|=|h_{n}|=2^{-l_n^{4}}/2$, such that the first coordinates of  $\xxx$ and $\xxx+{\mathbf h}_n$ both belong to $[(j_{n}+1)2^{-l_{n}^{2}}-2^{-l_n^{4}},(j_{n}+1)2^{-l_{n}^{2}}] $.

Since $\rho_n $ and $\ep_n$ are still much smaller than $|h_n|$, equations \eqref{*2**a} and \eqref{*2**b} still hold, but now \eqref{*2**d} is replaced by
\begin{equation}\label{*2**d2}
 \overline{f}_{l_n}({\mathbf x}+\mathbf{h}_{n})-\overline{f}_{l_n}({\mathbf x})-\partial_1\overline{f}_{l_n}({\mathbf x})h_{n} = \frac{\partial_{1}^2\overline{f}_{l_n}({\mathbf x})}{2!} h_{n} ^2 = 2^{l_n^4-l_n-l_n^2}\frac{h_{n} ^2}{2} .
\end{equation}

Using the same arguments as before but with \eqref{*2**d2}, we deduce that
$$
\Big |\frac{\partial_1^{2}f_{m_n}({\mathbf x})}{2!}h_{n}^{2}-D_{2}h_{n}^{2} +2^{l_n^4-l_n-l_n^2}\frac{h_{n} ^2}{2}\Big| <
(C_{{\mathbf x}}+2)|h_{n}|^{2+\ep}.
$$
 This last inequality becomes impossible when $n$ becomes large, since $|\frac{\partial_1^{2}f_{m_n}(\xxx)}{2}|\leq M_{m_n ,3}\leq l_{n}  \leq 2^{l_n^3}$. Hence a contradiction.

\end{proof}

\section{The upper estimate}

We start with the upper bound for the Hausdorff dimensions of the sets $E^{\leq }_f(h)$.

\begin{proposition}\label{*C1dim}
If $1< h\leq 2$ and $f\in  {\CC}$, then $\dim  {E_ {f} ^ {\leq }}(h) \leq d+h-2.$
\end{proposition}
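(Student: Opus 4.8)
The plan is to trade the one-dimensional slicing heuristic (which is not valid at the level of Hausdorff dimension) for a density estimate on the finite Laplacian measure $\mu=\Delta f$. Since the case $h=2$ gives the trivial bound $d$, assume $h\in(1,2)$ and put $\beta=h-1\in(0,1)$. As the boundary $\partial([0,1]^d)$ has dimension $d-1\le d+h-2$, it suffices to bound $\dim\big(E^{\le}_f(h)\cap(0,1)^d\big)$. Fix such a point $\xxx$ and set $h'=h_f(\xxx)\le h$; interior convexity gives $h'\in[1,2)$. By Proposition~\ref{gototheaxis} there is an index $i$ so that the convex one-dimensional restriction $g_{\zz'}(s)=f(\xxx+s\zz')$ has pointwise exponent exactly $h'$ at $s=0$ for \emph{every} direction $\zz'\in C_i$; accordingly I would split $E^{\le}_f(h)\cap(0,1)^d=\bigcup_{i=1}^{N}L_i$.

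Next I would convert this directional regularity into curvature. Write $\psi_{\zz'}(r)=g_{\zz'}'(r)-g_{\zz'}'(0)$, which by convexity is nonnegative and nondecreasing in $r$. Applying Lemma~\ref{*hregd2} to $g_{\zz'}$ shows that for every $\zz'\in C_i$ and every $\ep>0$ there is a sequence $s_n\downarrow0$ (depending on $\zz'$) with $\psi_{\zz'}(s_n)\ge s_n^{\beta+\ep}$. The object that assembles these directional curvatures is $\mu=\Delta f$, a nonnegative Radon measure that is finite on every compact subset of $(0,1)^d$; the divergence theorem together with $\int_{S_d}\zz'\,d\zz'=0$ gives the flux identity $\int_{S_d}\psi_{\zz'}(r)\,d\zz'=\mu\big(B(\xxx,r)\big)/r^{d-1}$.

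The heart of the argument is the lower density bound $\limsup_{r\to0}\mu\big(B(\xxx,r)\big)/r^{\,d-1+\beta+2\ep}=+\infty$ for every $\xxx\in L_i\cap(0,1)^d$. I would obtain it by integrating in the radius: monotonicity of $\psi_{\zz'}$ yields $\psi_{\zz'}(r)\ge s_n^{\beta+\ep}$ on $[s_n,2s_n]$, hence $\int_0^1\psi_{\zz'}(r)\,r^{-1-\beta-\ep}\,dr=+\infty$ for each $\zz'\in C_i$. Integrating this over the cap $C_i$ (which has positive measure) and using Tonelli and the flux identity gives $\int_0^1\mu\big(B(\xxx,r)\big)\,r^{-d-\beta-\ep}\,dr=+\infty$, which is incompatible with a finite upper $(d-1+\beta+2\ep)$-density. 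The standard density theorem — a finite measure with infinite upper $\alpha$-density on a set $A$ forces $\mathcal H^{\alpha}(A)=0$ — then yields $\mathcal H^{\,d-1+\beta+2\ep}(L_i\cap K)=0$ for all compact $K\subset(0,1)^d$. Exhausting the interior, letting $\ep\to0$, and taking the union over $i=1,\dots,N$ gives $\dim E^{\le}_f(h)\le d-1+\beta=d+h-2$.

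The main obstacle, and the reason for this detour, is the step from one to $d$ dimensions. The naive plan — bound each line-slice of $L_i$ by $\beta$ via \eqref{inegmonotoneb} and add the transverse dimension $d-1$ — is unavailable, because there is no Fubini upper bound for Hausdorff dimension: a set all of whose slices in a fixed direction are single points can nonetheless have dimension $d$, so per-slice control never controls the whole. Convexity is exactly what repairs this, and it enters twice: it makes $\psi_{\zz'}\ge0$, so restricting the flux from $S_d$ to the cap $C_i$ only lowers it and still leaves a valid lower bound for $\mu(B(\xxx,r))$; and it makes $\psi_{\zz'}$ nondecreasing in $r$, which is what turns the sparse, direction-dependent ``bad radii'' into a genuinely divergent radial integral. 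Proposition~\ref{gototheaxis} plays the complementary role of supplying a full cap $C_i$ of bad directions, without which a single direction would contribute nothing to the spherical integral.
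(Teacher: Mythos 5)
Your argument is correct in its essentials, but it takes a genuinely different route from the paper, and your stated reason for the detour slightly misreads what is available. The paper does \emph{not} attempt a Fubini-type upper bound; after the same decomposition $E_f^{\le}(h)=\bigcup_{i=1}^N E_i$ via Proposition \ref{gototheaxis}, it argues by contradiction using Marstrand's slice theorem (Theorem \ref{th_marstrand}): if $\dim E_i>d+h-2$, then for almost every direction --- in particular for some $\zz\in C_i$ --- there is a line $\mathcal{D}$ with $\dim(E_i\cap\mathcal{D})\ge h-1$, and Lemma \ref{*hregd2} then contradicts the one-dimensional monotone bound \eqref{inegmonotoneb} applied to $g'_{\pm}$ on that line. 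So per-slice control \emph{is} usable in the contrapositive direction, and the paper's proof is shorter for it. Your replacement --- converting the directional oscillation into a lower bound on the upper density of the Laplacian measure $\mu=\Delta f$ via the flux inequality $\int_{S_d}\psi_{\zz'}(r)\,d\zz'\le \mu(B(\xxx,r))/r^{d-1}$ (note you only ever need this as an inequality, which holds for a.e.\ $r$ even where $f$ is not differentiable), followed by the standard density theorem --- is a legitimate, self-contained alternative that avoids both Marstrand and the citation of \eqref{inegmonotoneb}, at the price of the divergence-theorem bookkeeping.

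One step needs repair. Lemma \ref{*hregd2} gives only $\min\bigl(h_{g'_+}(0),h_{g'_-}(0)\bigr)\le h'-1$ for the restriction $g_{\zz'}$: the large oscillation of the derivative may sit entirely on the ray $s<0$, whereas your $\psi_{\zz'}(r)$ is defined for $r>0$ and sees only the positive ray. Hence ``for every $\zz'\in C_i$ there is $s_n\downarrow 0$ with $\psi_{\zz'}(s_n)\ge s_n^{\beta+\ep}$'' does not follow from what you cite. Two fixes are available: (a) the \emph{proof} of Proposition \ref{gototheaxis} actually produces points $\xxx+t_n\zz'$ on the positive ray with $g_{\zz'}(t_n)-g_{\zz'}(0)-g'_{\zz'}(0)t_n\ge (t_n/4)^{h'+\ep}$, and convexity then yields $\psi_{\zz'}(t_n)\ge 4^{-(h'+\ep)}t_n^{h'-1+\ep}$ directly; or (b) for each $\zz'\in C_i$ at least one of $\zz',-\zz'$ carries the divergent radial integral, so the set of good directions has surface measure at least half that of $C_i$ inside $C_i\cup(-C_i)$, which is all your Tonelli step requires. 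With either fix the density estimate, and hence the proposition, survives. You should also say a word about discarding the non-differentiability set of $f$ (dimension at most $d-1\le d+h-2$) and about the joint measurability of $(\zz',r)\mapsto\psi_{\zz'}(r)$ before invoking Tonelli.
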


\begin{proof}
It is sufficient to treat the case  $h\in (1,2)$. 

Assume that    $\dim  E_ {f} ^ {\leq }(h) > d+h-2$.

For every $\xxx\in E_ {f} ^ {\leq }(h)$, by Proposition \ref{gototheaxis}, there exists a cone of direction $C_{i_x}$, where   $i_x\in \{1,...,d\}$ such that for every $\zz\in C_{i_x}$, the restriction of $f$ to the straight line passing through $\xxx$  parallel to $\zz$ has exponent less than $h$. 

Let us call $E_i$ the set of elements of $E_ {f} ^ {\leq }(h)$ satisfying this property with $i_x = i\in\{1,...,N\}$. Obviously, $ E_ {f} ^ {\leq }(h)  = \bigcup_{i=1}^N E_i$, so there exists at least  one $i\in \{1,...,N\}$ such that $\dim E_i > d+h-2$.

Let us recall  the following special case of Marstrand's slicing theorem,  
Theorem 10.10 in Chapter 10 of
\cite{MATTILA}.     Recall that $S_d$ is the unit sphere in $\R^d$.

\begin{theorem}
\label{th_marstrand}
Let $E\subset \zu^d$ be a Borel set with Hausdorff dimension $\alpha\in (d-1,d)$. Then for 
 almost every $\zz\in S_d$ (in the sense of $(d-1)$-dimensional ``surface" measure),  there exists a set $E_\zz$ of positive $(d-1)$-dimensional Hausdorff  measure
in the hyperplane orthogonal to $\zz$ such that  for every $\xxx\in E_\zz$,  $\dim E\cap (\xxx+ \R\zz) =\alpha-(d-1)$.
\end{theorem}

Each $C_i$ has non-empty interior in the subspace topology of $S_{d}$,
hence it
is of positive $d-1$-dimensional measure.
Applying Theorem  \ref{th_marstrand} to $E_i$, one can find $\zz\in C_i$   and 
${\overline\xxx}  \in \zu^d$ such that if 
$\cal D = (\overline{\xxx} + \R\zz)$, then $\dim E_i\cap \cal D \geq  d+h-2-(d-1)= h-1$.   

Let us call $g$ the restriction of $f$ to $\cal D$. Then $g$ is still a convex function
of one variable.

By definition of $E_i$, every $\xxx \in \cal D \cap E_i$ satisfies $h_g(\xxx ) \leq h$.

Next, applying   Lemma \ref{*hregd2} to $g$, we deduce that   $\min(h_{g'_+}(\xxx), h_{g'_-}(\xxx)) \leq h -1$, for every $\xxx \in \cal D \cap E_i$.

Hence, at least one of the two sets $E^{\leq}_{g'_+}(h-1)$ and $E^{\leq}_{g'_-}(h-1)$ has Hausdorff dimension strictly greater than $h-1$.

But this is impossible, since both functions $g'_+$ and $g'_-$ are monotone, and for such functions, by \eqref{inegmonotoneb}, the Hausdorff dimension of $E^{\leq}_{g'_+}(h-1) $ and $E^{\leq}_{g'_-}(h-1)$ is necessarily less than $h-1 \in [0,1]$. Hence a contradiction, and the conclusion that $\dim  E_ {f} ^ {\leq }(h) \leq d+h-2$.
\end{proof}

\begin{proposition}\label{*C2dim}
If $0 \leq  h \leq 1$, $f\in  {\CC}$, then $\dim  {E_ {f} ^ {\leq }}(h) \leq d-1.$
\end{proposition}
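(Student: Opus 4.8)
The plan is to reduce everything to the single value $h=1$ and then invoke the already-proved estimate for exponents above $1$. Since $E^{\leq}_f(h)=\{\xxx:h_f(\xxx)\le h\}$ is manifestly nondecreasing in $h$, for every $h\in[0,1]$ one has $E^{\leq}_f(h)\subseteq E^{\leq}_f(1)$, so it suffices to prove $\dim E^{\leq}_f(1)\le d-1$. The conceptual reason the threshold $d-1$ appears is that a finite convex function is locally Lipschitz on the open cube $(0,1)^d$; hence $h_f(\xxx)\ge 1$ for every interior point $\xxx$, and consequently for $h<1$ the set $E^{\leq}_f(h)$ is entirely contained in the boundary $\partial([0,1]^d)$, which has dimension $d-1$. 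The only genuine content is therefore the set of interior points at which the exponent equals exactly $1$.

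The quick way to finish is to exploit monotonicity once more, now from above. For every $h'\in(1,2]$ we have $E^{\leq}_f(1)\subseteq E^{\leq}_f(h')$, and Proposition \ref{*C1dim} gives $\dim E^{\leq}_f(h')\le d+h'-2$. Since $E^{\leq}_f(1)$ is a fixed set whose dimension is bounded by $d+h'-2$ for \emph{every} $h'>1$, letting $h'\downarrow 1$ yields $\dim E^{\leq}_f(1)\le d-1$. Combined with the inclusion $E^{\leq}_f(h)\subseteq E^{\leq}_f(1)$ for $h\le 1$, this gives $\dim E^{\leq}_f(h)\le d-1$ for all $h\in[0,1]$, as claimed.

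If instead one wants a self-contained argument that does not appeal to the limiting case of Proposition \ref{*C1dim}, I would reprove the $h=1$ case by repeating the slicing scheme of that proposition verbatim with $h=1$. Assuming $\dim\big(E^{\leq}_f(1)\cap(0,1)^d\big)>d-1$, Proposition \ref{gototheaxis} lets one select a direction cone $C_i$ and, after passing to a subset of dimension $\alpha\in(d-1,d)$, Marstrand's slicing theorem (Theorem \ref{th_marstrand}) produces a line $\cal D$ in a direction $\zz\in C_i$ whose intersection with the set has positive dimension $\alpha-(d-1)>0$. Writing $g$ for the convex restriction of $f$ to $\cal D$, every such slice point satisfies $h_g\le 1$; Lemma \ref{*hregd2} with $h=1$ then forces $\min(h_{g'_+},h_{g'_-})\le 0$, so the slice lies in $E^{\leq}_{g'_+}(0)\cup E^{\leq}_{g'_-}(0)$. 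Because $g'_+$ and $g'_-$ are monotone, \eqref{inegmonotoneb} bounds each of these dimensions by $0$, contradicting the positive dimension of the slice. The point requiring care---the main obstacle---is precisely this endpoint $h=1$: interior points where $f$ is differentiable can still carry exponent exactly $1$, so the cheap observation that low exponents force non-differentiability does not suffice, and one really needs the slicing / monotone-derivative mechanism (or, as above, the limiting form of Proposition \ref{*C1dim}) to control their dimension.
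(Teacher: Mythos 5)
Your argument is correct, and it is in fact more complete than the paper's own proof, which is a two-line observation: a convex function finite on the cube is locally Lipschitz on $(0,1)^d$, so every interior point has exponent at least $1$, and ``the remaining points'' lie on the boundary, of dimension $d-1$. You make exactly this observation, but you also correctly notice that it only disposes of $h<1$: for $h=1$ the set $E^{\leq}_f(1)$ may contain interior points of exponent exactly $1$ (and for typical $f$ it contains a $(d-1)$-dimensional set of such points, by Theorem \ref{mainth2}), so the boundary argument alone does not bound its dimension. Your resolution of this endpoint -- using that $h\mapsto E^{\leq}_f(h)$ is nondecreasing together with Proposition \ref{*C1dim} applied to $h'\downarrow 1$, or alternatively rerunning the slicing machinery (Proposition \ref{gototheaxis}, Theorem \ref{th_marstrand}, Lemma \ref{*hregd2} and \eqref{inegmonotoneb}) at $h=1$ -- is valid in both versions and is not circular, since Proposition \ref{*C1dim} precedes this statement. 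In short: same core idea as the paper for $h<1$, plus a genuine and correctly executed patch at $h=1$ that the paper's ``immediate'' proof glosses over.
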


\begin{proof}
The proof is immediate: if $f\in  {\CC}$, the  pointwise exponent of $f$ at any $\xxx \in (0,1)^d$  is necessarily larger or equal than 1.
The remaining points are located on the boundary, whose dimension is $d-1$. And for every $h\in [0,1]$, it is easy to build examples of  convex functions such that $h_f(\xxx)=h$  for every $\xxx$ satisfying $x_1=0$, so the upper bound  $d-1$ for the Hausdorff dimension of  $E_ {f} ^ {\leq }(h) $ is optimal.
\end{proof}
\section{The lower  estimate}
 
 Using Lemma \ref{*djdiffo} it is rather easy to ``integrate" the result about functions in ${\cal M}^{1}= {{\cal M}}$ to obtain the one-dimensional result.
 
\begin{theorem}\label{*genlowero}
There is a dense $G_{{\delta}}$ set $ {{\cal G}}$ in $ {{\mathcal{CC}^1}}$  such that for any
$f\in {{\cal G}}$, and for any 
$1\leq h\leq 2$, $\dim  {E_ {f} ^ { }}(h)= h-1.$
 \end{theorem}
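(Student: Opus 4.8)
The strategy is to transfer the known multifractal result for typical monotone functions (equation \eqref{inegmonotone}) through the correspondence $f \leftrightarrow f'$, using Lemma \ref{*hregd} as the bridge. The key structural fact is that if $f\in \mathcal{CC}^1$ is differentiable and convex, then $f'$ is monotone increasing, and conversely integrating a monotone increasing function produces a convex differentiable one. This suggests building the residual set $\mathcal{G}\subset \mathcal{CC}^1$ so that the derivatives $f'$ of its members realize the generic monotone behavior.

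First I would set up the integration map carefully. Given a generic monotone increasing continuous function $\phi$ on $[0,1]$ satisfying \eqref{inegmonotone}, its primitive $F(x)=\int_0^x \phi(t)\,dt$ lies in $\mathcal{CC}^1$ with $F'=\phi$. The plan is to use this to show that $\mathcal{CC}^1$ contains a dense set whose derivatives are generic monotone functions, and then to verify that the defining conditions (open-dense approximations guaranteeing the correct exponents) pass through differentiation to yield a dense $G_\delta$ set $\mathcal{G}$. Here Lemma \ref{*djdiffo} is the essential technical tool: it controls $g'_\pm$ uniformly for $g$ in a small $\|\cdot\|$-ball around a differentiable convex $f$, which is exactly what is needed to show that the relevant sets in $\mathcal{CC}^1$ are open, so that intersecting countably many dense open sets produces a residual $\mathcal{G}$.

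For the exponent computation itself, I would argue pointwise. For $f\in\mathcal{G}$ and $1\le h\le 2$, Lemma \ref{*hregd} gives $h_f(x)=h_{f'}(x)+1$ whenever $h_f(x)\in[1,2)$; since $f'$ is monotone, its exponents lie in $[0,1]$ by \eqref{inegmonotoneb}, so $h_f(x)\in[1,2]$ everywhere on $(0,1)$, and
\begin{equation*}
E_f(h)=\{x: h_{f'}(x)=h-1\}=E_{f'}(h-1)
\end{equation*}
up to the endpoint $h=2$. The generic monotone spectrum \eqref{inegmonotone} then yields $\dim E_{f'}(h-1)=h-1$ for $h-1\in[0,1]$, hence $\dim E_f(h)=h-1$, which is the claim.

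The main obstacle I anticipate is \textbf{the correspondence between genericity in $\mathcal{CC}^1$ and genericity in $\mathcal{M}$}. Baire genericity is not automatically preserved by the (anti)derivative map, because that map is not an isometry for the sup norm: a small perturbation of $\phi$ in $\|\cdot\|$ corresponds to a small perturbation of $F$, but not vice versa, and differentiation is unbounded. Thus I cannot simply pull back a residual set. The careful point is to redo the construction directly in $\mathcal{CC}^1$, inserting perturbations (analogous to those used for the $\gamma_l$ functions in Definition \ref{defgamma}) whose derivatives force the monotone function $f'$ to have the required oscillation, and to use Lemma \ref{*djdiffo} to guarantee these conditions define open sets in $\mathcal{CC}^1$. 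The endpoint $h=2$ also requires separate care, since Lemma \ref{*hregd} only covers $h_f(x)\in[1,2)$; one must check directly that $\dim E_f(2)=1$, presumably by exhibiting a large set where $f'$ has exponent exactly $1$.
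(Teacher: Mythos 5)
Your proposal follows essentially the same route as the paper: transfer the generic monotone spectrum \eqref{inegmonotone} through the antiderivative map, using Lemma \ref{*djdiffo} to make the pulled-back conditions open in $\mathcal{CC}^1$ (the paper implements this by taking antiderivatives of a dense sequence in each dense open set of the residual set of monotone functions, working on $[\delta_0,1-\delta_0]$ and letting $\delta_0\to 0$), and Lemma \ref{*hregd} to identify $E_f(h)$ with $E_{f'}(h-1)$. You also correctly flag the two delicate points, namely that genericity does not automatically pass through differentiation and that the endpoint $h=2$ needs separate treatment, both of which the paper resolves essentially as you anticipate.
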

%
%
%
%
 \begin{proof} 
 Suppose $0< {\delta}_{0} <1/2$ is fixed. 
 
 Recalling the result  on typical monotone continuous  functions, there exists a $G_\delta$ set of functions  ${\cal G}^{{\cal M}, {\delta}_{0}} $ in  the set $\mathcal{M}^{1,\delta_0} := \{f:[\delta_0,1-\delta_0] \to \R, \ f \mbox{ monotone}\}$ such that every $f\in {\cal G}^{{\cal M}, {\delta}_{0}} $ satisfies \eqref{inegmonotone}.
 
 Let us write ${\cal G}^{{\cal M}, {\delta}_{0}} = \bigcap _{n\geq 1} {\cal G}^{{\cal M}, {\delta}_{0}}_n$, where ${\cal G}^{{\cal M}, {\delta}_{0}}_n$ is a dense open set in $\mathcal{M}^{1,\delta_0}$.
 
 Let us choose a dense sequence $(g_{n,m})_{m=1}^{{\infty}}$ in ${\cal G}^{{\cal M}, {\delta}_{0}}_{n}$.  
 
 By taking antiderivatives of the elements of this sequence and by a suitable definition on the intervals $[0,{\delta}_{0})\cup (1-{\delta}_{0},1]$, one can choose a   sequence of convex functions $(f_{n,m,k})_{m,k=1}^{+\infty} $ which is dense in $ \cal {CC}^1 $  such that for every $m,k\geq 1$, $f_{n,m,k}'(x)=g_{n,m}(x)$ for $x\in[ {\delta}_{0},1- {\delta}_{0}]$. These functions $f_{n,m,k}$ are continuously differentiable on $[\delta_0,1-\delta_0]$.

Now, choose $ {\varepsilon}_{n,m}>0$ such that
$$
 B_{\|\cdot\|}(g_{n,m}, {\varepsilon}_{n,m}) {\subset} {\cal G}^{{\cal M}, {\delta}_{0}}_{n},
$$
 where the ball $B_{\|\cdot\|}$  is taken in the set  ${{{\cal M}}^{1, {\delta}_{0}}}$ using the $L^\infty$-norm.

   Lemma \ref{*djdiffo} gives the existence  of $ {\varrho}_{n,m,k}>0$  such that  for every 
 $f\in B_{\|\cdot\|}(f_{n,m,k}, {\varrho}_{n,m,k}) \subset \mathcal{CC}^1$, the inequality
$$
 |f'_{\pm}(x)-g_{n,m}(x)|< {\varepsilon}_{n,m}
$$
 holds for all $x\in[ {\delta}_{0},1- {\delta}_{0}]$.

Let us now introduce 
$$ {{\cal G}}_{n}=\bigcup_{n,k}B_{\|\cdot\|}(f_{n,m,k}, {\varrho}_{n,m,k}).$$

By construction, $ {{\cal G}}_{n}$ is dense in $ {{\cal {CC}}^1}.$

 Proposition  \ref{*gendiff}  yields the existence of a  dense $G_{{\delta}}$ set   $ {{\cal D}}$ in $ \cal {CC}^1$   consisting of 
 functions $f$  continuously differentiable on $(0,1)$.

We finally set
$$ {{\cal G}}= {{\cal D}}\bigcap \left (\bigcap_{n=1}^{{\infty}} {{\cal G}}_{n} \right).$$
 Clearly, $ {{\cal G}}$ is a dense
 $G_{{\delta}}$ set in $ {{\cal {CC}^1}}.$
 
 Suppose that $f\in {{\cal G}}$, and set $g=f'$ on $(0,1)$ and $g_{0}=g|_{[ {\delta}_{0},1- {\delta}_{0}]}$.
 
 Then $g_{0}\in \bigcap_{n=1}^{{\infty}}{\cal G}^{{\cal M}, {\delta}_{0}}_{n}$ 
 and  equation \eqref{inegmonotone} implies that for any $1\leq h\leq 2$, $\dim E_{g_{0}}(h-1)=h-1$.
 
 But  Lemma  \ref{*hregd} yields, $E_{g_{0}}(h-1)= E_{f|_{[ {\delta}_{0},1- {\delta}_{0}]}}({h})$, hence $\dim E_{f}(h)=h-1$ for any $1\leq h\leq 2$.
 
 
 Since $ {\delta}_{0}$ can be chosen arbitrarily small by taking a sequence of $ {\delta}_{0}$'s tending to zero, we can conclude the proof of the theorem.
 \end{proof}

Next we turn to the higher dimensional case.

\begin{theorem}\label{*genlowerd}
There is a dense $G_{{\delta}}$ set $ {{\cal G}}$ in $ {\CC}$  such that 
for any $f\in {{\cal G}}$ and   $1\leq h \leq2$ , one has $\dim E_{f}(h)\geq h+d-2$. In addition,  $E_{f}(h)= {\emptyset}$ for $h>2$, $E_{f}(h)\cap  {[0,1]^d}= {\emptyset}$
for $0<h<1$, and  $ {\partial} ({[0,1]^d})=E_{f}(0)$.
\end{theorem}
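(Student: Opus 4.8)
The plan is to prove Theorem \ref{*genlowerd} by combining the upper bounds already established (Theorem \ref{mainth1}, proved via Propositions \ref{*C1dim} and \ref{*C2dim}) with a lower-bound construction obtained by ``fibering'' the one-dimensional result of Theorem \ref{*genlowero} over the remaining $d-1$ coordinates. The easy parts first: the statements $E_f(h)=\emptyset$ for $h>2$ and $E_f(h)\cap(0,1)^d=\emptyset$ for $0<h<1$ together with $\partial([0,1]^d)=E_f(0)$ are already essentially delivered by the $G_\delta$ sets of the previous section. Indeed, intersecting with $\mathcal{G}^2$ from Proposition \ref{*lemupperh*} kills all exponents $h>2$; intersecting with $\mathcal{G}_\infty$ from Proposition \ref{*ddoo} forces $h_f\equiv 0$ on $\partial([0,1]^d)$, and Proposition \ref{*C2dim} (together with the remark that interior points of a convex function have exponent $\geq 1$) shows no interior point can have $0<h<1$. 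So the genuine content is the inequality $\dim E_f(h)\geq h+d-2$ for $h\in[1,2]$, and matching it against the upper bound $d+h-2$ from Theorem \ref{mainth1} to get equality.

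For the lower bound I would build a generic convex function of the form $f(\xxx)=\varphi(x_1)+Q(x_2,\dots,x_d)$, where $\varphi\in\mathcal{CC}^1$ is a typical one-variable convex function from Theorem \ref{*genlowero} and $Q$ is a fixed smooth strictly convex quadratic in the transverse variables (chosen so that $f$ is genuinely convex on the cube). The point of this ansatz is that along every line parallel to $\mathbf e_1$ the regularity of $f$ is governed by $\varphi$, which by Theorem \ref{*genlowero} has $\dim E_\varphi(h)=h-1$ for $h\in[1,2]$; meanwhile the smooth transverse directions contribute full regularity, so the exponent at a point $(x_1,x_2,\dots,x_d)$ is exactly $\min(h_\varphi(x_1),2)=h_\varphi(x_1)$ when $h_\varphi(x_1)\in[1,2]$. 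Consequently $E_f(h)=E_\varphi(h)\times[0,1]^{d-1}$ (up to boundary adjustments), and the product formula for Hausdorff dimension gives
\begin{equation}
\label{productdim}
\dim E_f(h)=\dim E_\varphi(h)+(d-1)=(h-1)+(d-1)=h+d-2.
\end{equation}
The required density and $G_\delta$ structure is then inherited: the set of such ``separable'' functions is dense in $\CC$ (one approximates any smooth strictly convex function, then perturbs the $x_1$-profile into the generic one-variable class), and one threads the construction through a countable family of balls exactly as in the proof of Proposition \ref{*gendiff}, producing a dense $G_\delta$ set on which \eqref{productdim} holds.

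The main obstacle, and the place where care is genuinely needed, is that the clean separable ansatz $f=\varphi(x_1)+Q$ is a dense \emph{subset} but not itself residual, so one cannot simply declare the product structure to hold for \emph{typical} $f$. I would instead transfer the lower bound via a perturbation-stable argument: fix a small $\delta_0>0$, and show that for every $g$ in a suitable dense open set near such a separable function, and for every $n$, the line $x_1\mapsto g(x_1,c_2,\dots,c_d)$ (with the transverse coordinates frozen at generic values) has one-variable exponent profile close enough to $\varphi$'s that the slices still carry a set of dimension $h-1$ inside $E_g^{\leq}(h)$. Lemma \ref{*hregd} is the key tool making this robust, since it converts control of $h_{g'}$ back into control of $h_g$ along the slice. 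Combining the slice-wise lower bound $h-1$ with the $d-1$ transverse dimensions via a Fubini-type lower estimate for Hausdorff dimension (e.g. using that $E_g(h)$ projects onto a set of positive $(d-1)$-measure in the transverse hyperplane, each fiber having dimension $\geq h-1$) yields $\dim E_g(h)\geq h+d-2$ on a dense $G_\delta$. Matching with Theorem \ref{mainth1} closes the proof; the delicate step is ensuring the transverse smoothness is preserved under the generic perturbation so that the exponent is never dragged below $h_\varphi(x_1)$ by bad transverse behavior, which is exactly where freezing at a positive-measure set of good transverse coordinates is essential.
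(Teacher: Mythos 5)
Your treatment of the easy assertions ($E_f(h)=\emptyset$ for $h>2$ via $\mathcal{G}^2$, $E_f(h)\cap(0,1)^d=\emptyset$ for $0<h<1$ since interior exponents of convex functions are $\geq 1$, and $\partial([0,1]^d)=E_f(0)$ via $\mathcal{G}_\infty$) matches the paper and is fine. The core lower bound, however, has a genuine gap at its very first step: the separable functions $f(\xxx)=\varphi(x_1)+Q(x_2,\dots,x_d)$ are \emph{not} dense in $\CC$. Take $f(x_1,x_2)=\tfrac12(x_1+x_2)^2$; the mixed second difference $f(1,1)-f(1,0)-f(0,1)+f(0,0)=1$ vanishes identically for every separable $g$ and changes by at most $4\|f-g\|$ under perturbation, so every separable $g$ satisfies $\|f-g\|\geq 1/4$. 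Thus your ``clean ansatz'' cannot seed a dense $G_\delta$. The paper instead takes an arbitrary dense sequence of smooth convex $f_m$ (which carry all the transverse coupling) and adds perturbations $\overline f_l$ depending on $x_1$ only; the relevant object is then $g=\partial_1 f$, which is monotone in $x_1$ on every slice.

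The second, deeper gap is the transfer from the dense family to a residual set. You propose to show that for $g$ near a separable model the slices have ``exponent profile close enough to $\varphi$'s'' that they still carry a set of dimension $h-1$ in $E_g^{\leq}(h)$ — but pointwise H\"older exponents are not stable under sup-norm perturbation, and Baire-generic properties of one-variable functions do not fiber (there is no sense in which ``typical slices of a typical $f$ are typical one-variable convex functions''), so Theorem \ref{*genlowero} cannot be used as a black box here. What makes the paper's argument work is that the singular sets $F_{h-1}=\bigcap_k F_{h-1,k}$ and the measures $\mu_{h-1}$ from \cite{BSJMAA} are \emph{explicit}, depend only on the scales $l_k$ of the staircase perturbations $\gamma_{l_k}$ (hence are the same on every slice, giving the exact product $F_{h-1}\times[0,1]^{d-1}$), and the upper bound $h_g\leq h-1$ on this set is valid for \emph{every} $g$ in the sup-norm balls $B_{\|\cdot\|}(\varphi_k+\widetilde g_{l_k},\ep_{l_k})$, which is exactly the quantitative stability your sketch assumes but does not supply. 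Finally, even granting a set of dimension $\geq h+d-2$ inside $E_f^{\leq}(h)$, passing to the exact level set $E_f(h)$ requires the measure $\mu_{h-1}$: the sets $E_f^{\leq}(h-1/n)$ each have dimension $<h+d-2$ but their union can still have dimension $h+d-2$, so ``matching with Theorem \ref{mainth1}'' is not enough — one must check $\mu_{h-1}\bigl(E_f^{\leq}(h-1/n)\bigr)=0$ for every $n$ and conclude $\mu_{h-1}(E_f(h))>0$, as the paper does.
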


\begin{proof}
Now instead of the functions, we can ``integrate" the proof used in Section 4 of \cite{BSJMAA}.
The idea is again to reduce the problem to the one-dimensional case.
We select one coordinate direction, for ease of notation  the first, the $x_{1}$-axis.
We use in our proof the perturbation functions which are constant in the directions of the coordinate axes $x_{j}$, $j=2,...,d$ already used  in this paper, given in Definition \ref{defgamma}.

Let us select a dense set of $ {{C^ {\infty}}}$ functions $\{ \displaystyle  f_{m}:m=1,2,...\}$ which is dense in $ {\CC}$.
The $x_{1}$-partial derivatives, $ {\partial}_{1} f_{m}$, of these functions will be denoted by $g_{m}$.  The important feature of these functions 
$g_{m}$ is the fact that they are monotone increasing in the $x_{1}$-variable. As our example at the beginning of the paper shows (see Remark \ref{rk1}),  these functions are not necessarily monotone in the other variables, and this is why one cannot ``integrate" simply the MISV genericity results.

Now, as in \cite{BSJMAA} and   in the proof of Proposition \ref{*lemupperh*}, we use the functions  ${\overline \ggg}_{l}$ and the perturbations ${\overline f}_{l}$ defined in \eqref{*13**a} and \eqref{*14**a}.

Next,  apply Lemma \ref{*djdiffd}  to the functions $f_{m}+ {\overline f}_{m+n}$ with $ {\varepsilon}= \ep_{n+m}$ and $j=1$.
There exists a constant, denoted by $\varrho_{m,n}>0$,  such that if
$f\in B_{\|\cdot\|}(f_{m}+{\overline f}_{m+n},\varrho_{m,n})$, then
for any $x_{1}\in[ {\ep}_{n+m},1- {\ep}_{n+m}]$ and $x_{j}\in [0,1]$ for $j=2,...,d$, one has
$$| {\partial}_{1,\pm}f({\mathbf x})- {\partial}_{1}(f_{m}+{\overline f}_{m+n})({\mathbf x})|< {\ep}_{n+m},$$
that is,
\begin{equation}\label{*R19*a}
| {\partial}_{1,\pm}   f({\mathbf x})-(g_{m}+ {\overline \ggg}_{m+n}({\mathbf x}))|< {\ep}_{n+m}
\end{equation}
holds.
Without limiting generality one  can assume that $\varrho_{m,n}\to 0$
if $n$ is fixed and $m\to {\infty}$.

Set
$$ {{\cal R}}_{n}=\bigcup_{m=1}^{{\infty}} B_{\|\cdot\|}(f_{m}+{\overline f}_{m+n},\varrho_{m,n}) .$$
It is not difficult to see that $ {{\cal R}}_{n}$ is open and dense in $ {\CC}$. Denote by $ {{\cal D}}$ a dense $G_{{\delta}}$ set in $ {\CC}$, which consists of functions differentiable on $ {(0,1)^d}$, and such that (according to Proposition  \ref{*lemupperh*}) these functions also have nowhere a pointwise exponent strictly greater than 2.

 Finally, set 
$$ {{\cal G}}= {{\cal D}}\bigcap  \left(\bigcap_{n=1}^{{\infty}}  {{\cal R}}_{n}\right).$$

Let $f\in  {{\cal G}}$, and  $1\leq h \leq 2$. We are going to prove that $\dim E_{f}(h)\geq h+d-2$, by reducing the argument to a situation already totally taken care of in \cite{BSJMAA}.

By construction, there exists a sequence of integers $(m_{n})_{n\geq 1}$  such that $f\in B_{\|\cdot\|}(f_{m_{n}}+{\overline f}_{m_{n}+n},\varrho_{n,m_{n}})$ for every $n$.

Set $g= {\partial}_{1} f.$

By \eqref{*R19*a}, when $x_{1} \in [ \ep_{n+m_{n}},1- \ep_{n+m_{n}}]$ and $x_{j}\in [0,1]$
for $j=2,...,d$, one has for $\xxx=(x_1,x_2,...,x_d)$ 
\begin{equation}\label{*R20*a}
|g({\mathbf x})-(g_{m_{n}}+{\overline \ggg}_{m_{n}+n})({\mathbf x})|< \ep_{n+m_{n}}.
\end{equation}

Given $0< {\delta}_{0}<1/10$, choose an integer $n_{1}$  such that $ {\ep}_{n_{1}}< {\delta}_{0}/100.$

Select an increasing subsequence $(n_{k})_{k\geq 1}$ such that the following conditions are fulfilled:  put $l_{k}=m_{n_{k}}+n_{k}$.  One assumes that $$l_k > 2^ k , ((l_k)^2 + l_k)k + 1 < (l_k)^4, \  2 ^{-((l_{k-1})^2+l_{k-1})(k-1)-1} > 100 \cdot  2^{-(l_k)^2}$$
and if $D_k = 2^{(l_k)^2} \cdot 2^{-((l_k)^2+l_k)k-2} < 1 $, then one also assumes that $k$ is so large that
$$ D_1 \cdots  D_{k-1} > 2^{-l_k}.$$
These conditions are equations (27) and (28) in \cite{BSJMAA}.

Denote by $ {\varphi}_{k}$ the restriction of $g_{m_{n_{k}}}$
onto $[ {\delta}_{0},1- {\delta}_{0}]\times [0,1]^{d-1}$ and by ${\widetilde {g}}_{l_{k}}$ the restriction of ${\overline \ggg} _{m_{n_{k}}+n_{k}}$ onto $[ {\delta}_{0},1- {\delta}_{0}]\times [0,1]^{d-1}$.
For ease of notation for the restriction of $g$ onto $[ {\delta}_{0},1- {\delta}_{0}]\times [0,1]^{d-1}$
we  will still use the notation $g$.

Then $g\in B_{\|\cdot\|}( {\varphi}_{k}+{\widetilde {g}}_{l_{k}}, {\ep}_{l_{k}})$ for all $k=1,...$, where  the ball is taken  with respect to the supremum norm in the space of continuous functions   monotone in the first variable.

\medskip

Now, we are exactly in the context of our previous article \cite{BSJMAA}, in which we proved the following sequence of propositions (cf  Propositions 13-18 of \cite{BSJMAA}). We reproduce the definitions given in \cite{BSJMAA} and the associated propositions.

For every $h\in (1,2)$ and $k\geq 2$, let
$$ F_{h-1,k}=\bigcup_{j=0}^{2^{(l_k)^2}-1} \left[  (j+1)2^{-(l_k)^2}  - 2^{\frac{(l_k)^2+l_k}{h-1}},  (j+1)2^{-(l_k)^2}  - \frac{1}{2} 2^{\frac{(l_k)^2+l_k}{h-1}}\right] .$$
For $h=1$, set $ F_{h-1,k}=F_{0,k}$ as
$$  F_{0,k}=  \bigcup_{j=0}^{2^{(l_k)^2}-1} \left[  (j+1)2^{-(l_k)^2}  - 2^{k{((l_k)^2+l_k)}{}},  (j+1)2^{-(l_k)^2}  - \frac{1}{2} 2^{{k((l_k)^2+l_k)}{}}\right]. $$

One has \cite{BSJMAA}:
\begin{proposition}
\label{propmin}
For $h\in [1,2)$, let $k_h = \max (3, [1/h]+2)$  and 
$$F_{h-1} = \bigcap_{k\geq  k_h } F_{h-1,k} \ \subset \  [{\delta}_{0},1- {\delta}_{0}] .$$
For every $\xxx\in F_{h-1}\times \zu^{d-1}$, $h_g(\xxx)\leq h-1.$

In particular, $\dim E_g(0)=d-1$.
\end{proposition}

\begin{proposition}
For $h\in [1,2)$,  there exists a probability measure $\mu_{h-1}$ such that $\mu_{h-1}( F_{h-1}\times \zu^{d-1}) =1$  and for every $\xxx\in  F_{h-1}\times \zu^{d-1}$, 
\begin{equation}\label{*R22*a}
\liminf_{r\to 0+}\frac{\log  {{ {\mu}}_{h-1}}\big (B(\xxx,r) \big)}
{\log r}\geq d+h-2.
\end{equation}  

\end{proposition}
Hence, using the mass distribution principle, one deduces from \eqref{*R22*a} that $\dim (F_{h-1}\times \zu^{d-1} ) \geq d+h-2$. From the last two propositions, one deduces that for every $h\in [1,2]$, 
$$\dim E_g^{\leq } (h-1) \geq  d+h-2.$$ 

%
%
%

Finally, Proposition 18 of \cite{BSJMAA} gives:

\begin{proposition}
For every  $h>2$, $ E_g(h-1) = \emptyset$.
\end{proposition}

Finally, we use that the functions $f$   have nowhere a pointwise H\"older exponent greater than 2 (by Proposition \ref{*lemupperh*}), and we finish the proof of Theorem \ref{mainth2}.

 If $\xxx \in    F_{h-1}\times \zu^{d-1} $,  then  $h_g(\xxx) \leq h -1 \in [0,1]$. Necessarily, $ h_g(\xxx) +1\leq h_f(\xxx) \leq 2$. By   Lemma \ref{*hregd}, the only possibility is $h_f(\xxx)  = h_g(\xxx)+1 \leq  (h-1)+1 = h$. Hence $ \dim E^{\leq }_f(h) = \dim E^{\leq}_g(h-1) \geq d+h-2$. Even more, the same argument as above (Proposition \ref{propmin} combined with Proposition \ref{*C1dim}) gives $\mu_{h-1} (E^{\leq }_f(h))>0$. 
 
 Using the upper bound of Theorem \ref{mainth1}, one knows that for every large integer $n\geq 1$, $\dim  (E^{\leq }_f(h-1/n)) \leq h-1/n+d-2$, hence $\mu_{h-1} \big (  E^{\leq }_f(h-1/n) \big ) =0$.  Since 
 $$ E_f(h) =E_f^{\leq} (h) \setminus \bigcap_{n\geq 1}  E^{\leq }_f(h-1/n)  ,$$
 one deduces that $\mu_{h-1} (E_f(h)) >0$, which implies that $\dim E_f(h) \geq d+h-2$. Since the converse inequality also holds true, one concludes that $\dim E_f(h) = d+h-2$.
\end{proof}


\bibliographystyle{plain}
\bibliography{Bib_tcv}

\end{document}